\documentclass[11pt]{amsart}
\usepackage{mathrsfs}
\usepackage{amssymb,latexsym}
\usepackage{pstcol,pstricks,color}

\setlength{\unitlength}{1mm} \topmargin 0 pt \textheight
50\baselineskip \advance\textheight by \topskip
\setlength{\parindent}{0pt} \setlength{\parskip}{3pt plus 2pt minus
1pt} \setlength{\textwidth}{155mm} \setlength{\oddsidemargin}{5.6mm}
\setlength{\evensidemargin}{5.6mm} \numberwithin{equation}{section}

\newtheorem{theorem}{Theorem}[section]
\newtheorem{proposition}[theorem]{Proposition}
\newtheorem{corollary}[theorem]{Corollary}

\newtheorem{lemma}[theorem]{Lemma}

\newtheorem{example}[theorem]{Example}

\def\qed{\hfill $\Box$}
\def\pf{\noindent {\it Proof.} }

\title{Potential Polynomials and Motzkin Paths}

\begin{document}
\maketitle
\begin{center}
Yidong Sun

Department of Mathematics, Dalian Maritime University, 116026 Dalian, P.R. China\\[5pt]

{\it sydmath@yahoo.com.cn}
\end{center}\vskip0.5cm

\subsection*{Abstract}
A {\em Motzkin path} of length $n$ is a lattice path from $(0,0)$ to
$(n,0)$ in the plane integer lattice $\mathbb{Z}\times\mathbb{Z}$
consisting of horizontal-steps $(1, 0)$, up-steps $(1,1)$, and
down-steps $(1,-1)$, which never passes below the $x$-axis. A {\em
$u$-segment {\rm (resp.}\ $h$-segment {\rm )}} of a Motzkin path is
a maximum sequence of consecutive up-steps ({\rm resp.}\
horizontal-steps). The present paper studies two kinds of statistics
on Motzkin paths: "number of $u$-segments" and "number of
$h$-segments". The Lagrange inversion formula is utilized to
represent the weighted generating function for the number of Motzkin
paths according to the statistics as a sum of the partial Bell
polynomials or the potential polynomials. As an application, a
general framework for studying compositions are also provided.

\medskip

{\bf Keywords}: Partial Bell polynomials, potential polynomials,
Motzkin paths, compositions

\noindent {\sc 2000 Mathematics Subject Classification}: Primary
05A05, 05A15; Secondary 05C90

\section{Introduction}

A {\em Motzkin path} of length $n$ is a lattice path from $(0,0)$ to
$(n,0)$ in the plane integer lattice $\mathbb{Z}\times\mathbb{Z}$
consisting of up-steps $u=(1,1)$, horizontal-steps $h=(1,0)$, and
down-steps $d=(1,-1)$. Denote by $\mathscr{M}_{n}$ the set of
Motzkin paths of length $n$. Let $\mathscr{M}_{n}^{m,k}$ denote the
set of Motzkin paths of length $n$ (i.e. $n=2m+k$) with $m$ up steps
and $k$ horizontal steps and $\mathscr{D}_{m}$ denote the set of
{\em Dyck paths}, namely, Motzkin paths in $\mathscr{M}_{n}^{m,0}$.
Let $P$ be any Motzkin path in $\mathscr{M}_{n}$, a {\em $u$-segment
{\rm (resp.}\ $h$-segment {\rm )}} of $P$ is a maximum sequence of
consecutive up-steps ({\rm resp.}\ horizontal-steps) in $P$ and
define $u_i(P) \ ({\rm resp.}\ h_i(P))$ to be the number of
$u$-segments ({\rm resp.}\ $h$-segments) of length $i$ in $P$ and
call $P$ having the $u$-segments ($h$-segments) of type
$1^{u_1(P)}2^{u_2(P)}\cdots \ ({\rm resp.}\
1^{h_1(P)}2^{h_2(P)}\cdots )$.

In two previous papers\cite{mansun1, mansun2}, we study two kinds of
statistics on ($k$-generalized) Dyck paths: "number of $u$-segments"
and "number of internal $u$-segments". In this paper, we consider
the statistics "number of $u$-segments" and "number of
$h$-segments". In order to do this we present two tools we will use:
the Lagrange inversion formula and the potential polynomials.

\subsection*{Lagrange Inversion Formula~\cite{wilf}}
If $f(x)=\sum_{n\geq 1}f_nx^n$ with $f_1\neq 0$, then the
coefficients of the composition inverse $g(x)$ of $f(x)$ $({\rm
namely}, f(g(x))=g(f(x))=x)$ can be given by
\begin{eqnarray}\label{eqn 1.1}
[x^n]g(x)=\frac{1}{n}[x^{n-1}]\big(\frac{x}{f(x)}\big)^{n}.
\end{eqnarray}
More generally, for any formal power series $\Phi(x)$,
\begin{eqnarray}\label{eqn 1.2}
[x^n]\Phi(g(x))=\frac{1}{n}[x^{n-1}]\Phi'(x)\big(\frac{x}{f(x)}\big)^{n},
\end{eqnarray}
for all $n\geq 1$, where $\Phi'(x)$ is the derivative of $\Phi(x)$
with respect to $x$.
\subsection*{The Potential Polynomials~\cite[pp. 141,157]{comtet}} The potential polynomials
$\textbf{P}_{n}^{(\lambda)}$ related to a given sequence
$(f_n)_{n\geq 1}$ are defined for each complex number $\lambda$ by
\begin{eqnarray*}
1+\sum_{n\geq
1}\textbf{P}_{n}^{(\lambda)}\frac{x^n}{n!}&=&\left\{1+\sum_{n\geq
1}f_n\frac{x^n}{n!}\right\}^\lambda,
\end{eqnarray*}
which can be represented by Bell polynomials
\begin{eqnarray}\label{eqn 1.3}
\textbf{P}_{n}^{(\lambda)}=\textbf{P}_{n}^{(\lambda)}(f_1,f_2,f_3,\dots)=\sum_{1\leq
k\leq n}\binom{\lambda}{k}k!{\bf{B}}_{n,k}(f_1,f_2,f_3,\dots),
\end{eqnarray}
or if $\lambda=r$ is a positive integer, then
\begin{eqnarray}\label{eqn 1.4}
\textbf{P}_{n}^{(r)}=\textbf{P}_{n}^{(r)}(f_1,f_2,f_3,\dots)
=\binom{n+r}{r}^{-1}{\bf{B}}_{n+r,r}(1,2f_1,3f_2,4f_3,\dots),
\end{eqnarray}
where ${\bf B}_{n,i}\big(x_1,x_2,\cdots\big)$ is the partial Bell
polynomial on the variables $\{x_j\}_{j\geq 1}$ \cite{Be}, which has
the explicit formula
\begin{eqnarray}\label{eqn 1.5}
{\bf
B}_{n,r}\big(x_1,x_2,\cdots\big)=\sum_{\sigma_n(r)}\frac{n!}{r_1!r_2!\cdots
r_n!}
\left(\frac{x_1}{1!}\right)^{r_1}\left(\frac{x_2}{2!}\right)^{r_2}\cdots\left(\frac{x_n}{n!}\right)^{r_n},
\end{eqnarray}
where the summation $\sigma_n(r)$ is for all the nonnegative integer
solutions of $r_1+r_2+\cdots+r_n=r$ and $r_1+2r_2+\cdots +nr_n=n$.

In this paper, using the Lagrange inversion formula, we can
represent the generating functions for the number of Motzkin paths
according to our statistics (see Sections 2) as a sum of partial
Bell polynomials or the potential polynomials, for example
\begin{eqnarray*}
\lefteqn{\sum_{P\in\mathscr{M}_{n}^{m,k}}\prod_{i\geq1}t_i^{u_i(P)}\prod_{i\geq1}s_i^{h_i(P)}} \\
&=&\sum_{j=0}^{k}\sum_{\ell=j}^k(-1)^{\ell-j}{\binom{\ell-1}{\ell-j}\binom{m+j}{j}}\frac{{\bf
P}_{m}^{(m+j+1)}\big(1!t_1,2!t_2,\cdots\big)}{(m+1)!}\frac{\ell!{\bf
B}_{k,\ell}\big(1!s_1,2!s_2,\cdots\big)}{k!}.
\end{eqnarray*}
Many important special cases are considered which generate several
surprising results. As an application (see Section 3), compositions
can be regarded as a kind of special Motzkin paths, which leads to a
general framework to studying compositions by specializing the
parameters. Moreover, in the last section we generalize compositions
to matrix compositions.

\vskip0.5cm
\section{"$u$-segments" and "$h$-segments" statistics}

We start this section by studying the ordinary generating function
for the number of Motzkin paths of length $n$ according to the
statistics $u_1,u_2,\ldots$ and $h_1,h_2,\ldots$, that is,
\begin{eqnarray*}
M(x,y;{\bf
t;s})=M(x,y;t_1,t_2,\ldots;s_1,s_2,\ldots)=\sum_{m,k\geq0}x^my^k
\sum_{P\in\mathscr{M}_{n}^{m,k}}\prod_{i\geq1}t_i^{u_i(P)}\prod_{i\geq1}s_i^{h_i(P)}.
\end{eqnarray*}

\begin{proposition}\label{pro2.1}
The generating function $M(x,y;{\bf t;s})$ satisfies the functional
recurrence relation
\begin{eqnarray}\label{eqn 2.1}
M(x,y;{\bf t;s})&=&\frac{T(z)}{1-\frac{S(y)-1}{S(y)}T(z)},
\end{eqnarray}
where $T(x)=1+\sum_{i\geq 1}t_ix^i$, $S(y)=1+\sum_{i\geq 1}s_iy^i$
and $z=xM(x,y;{\bf t;s})$.
\end{proposition}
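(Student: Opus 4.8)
The plan is to prove the functional recurrence \eqref{eqn 2.1} by a ground-level decomposition of Motzkin paths, the one delicate point being the way consecutive up-steps merge into a single $u$-segment across the nesting of arches. Reading a path $P\in\mathscr{M}_n$ along the $x$-axis, I would write it as an alternating concatenation $P=W_0R_1W_1\cdots R_pW_p$, where each $W_j$ is a (possibly empty) maximal run of horizontal steps at height $0$ and each $R_j$ is a maximal block of consecutive arches $uEd$ with $E\in\mathscr{M}$ (a maximal stretch of steps that leave the axis). Maximality of the arch-blocks forces the interior runs $W_1,\dots,W_{p-1}$ to be nonempty, since otherwise two adjacent blocks would coalesce, whereas $W_0$ and $W_p$ may be empty. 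A run of horizontal steps of length $a$ is a single $h$-segment contributing $s_ay^a$, so an arbitrary run contributes $S(y)$ and a nonempty one contributes $S(y)-1$. Writing $\mathcal{R}$ for the generating function of one nonempty arch-block and summing the resulting geometric series, I expect to obtain
\[ M(x,y;{\bf t;s})=S(y)+\frac{S(y)^2\,\mathcal{R}}{1-(S(y)-1)\,\mathcal{R}}. \]

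The heart of the matter is the arch-block identity $\mathcal{R}=T(z)-1$ with $z=xM(x,y;{\bf t;s})$. I would decompose an arch-block by its opening $u$-segment, of some length $i\ge1$, so the block begins with $u^i$; the claim is that, after recording the factor $t_ix^i$ for this segment, the remainder is equivalent to an $i$-tuple of independent Motzkin paths, each contributing $M$, whence the blocks with opening $u$-segment $i$ contribute exactly $t_i(xM)^i=t_iz^i$ and $\mathcal{R}=\sum_{i\ge1}t_iz^i=T(z)-1$. This is precisely the mechanism behind the Dyck-path identity $D=T(xD)$ from \cite{mansun1,mansun2}: one climbs to height $i$ and then inserts a free excursion after descending to each of the levels $i-1,\dots,0$. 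The main obstacle is the merging of $u$-segments produced by nested arches, since wrapping an excursion $E$ in an arch lengthens its opening $u$-segment by one, compounded by the fact that, unlike a Dyck excursion, a Motzkin excursion may carry horizontal steps at the top level. I would handle this by observing that the weight $\prod_i t_i^{u_i(P)}\prod_i s_i^{h_i(P)}$ records only the multiset of segment lengths and not the heights at which the segments occur; this allows me to relocate any horizontal run sitting at the top of the opening ascent down to ground level and thereby produce a genuine weight-preserving bijection between arch-blocks with opening ascent $i$ and $i$-tuples $(M_1,\dots,M_i)$ of Motzkin paths. Checking that this map is a bijection preserving both $u$-segment and $h$-segment type is the step I expect to require the most care.

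Finally I would substitute $\mathcal{R}=T(z)-1$ into the assembly formula above. Using $S(y)-1$ for the nonempty horizontal runs together with the identity $S(y)-(S(y)-1)=1$, elementary algebra should collapse $S(y)+\dfrac{S(y)^2(T(z)-1)}{1-(S(y)-1)(T(z)-1)}$ to
\[ \frac{S(y)\,T(z)}{S(y)-(S(y)-1)\,T(z)}=\frac{T(z)}{1-\frac{S(y)-1}{S(y)}\,T(z)}, \]
which is exactly \eqref{eqn 2.1}. This last simplification is routine, so I anticipate that essentially all of the difficulty will be concentrated in the arch-block identity of the second paragraph, where the bookkeeping of the merged $u$-segments must be made precise.
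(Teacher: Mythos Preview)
Your overall plan is sound and the algebra is correct: the assembly formula together with $\mathcal{R}=T(z)-1$ does collapse to \eqref{eqn 2.1}. The paper organises the same ideas a bit differently and thereby avoids the relocation step entirely. It introduces the auxiliary generating functions $P$ and $Q$ for Motzkin paths beginning with an up-step and with a horizontal step, respectively, and obtains the system $M=1+P+Q$, $Q=(1+P)(S(y)-1)$ and $P=(1+Q)(T(z)-1)$, which it then solves for $M$. The identity $P=(1+Q)(T(z)-1)$ comes from exactly the first-return decomposition you invoke, but applied to an arbitrary $u$-starting path rather than to an arch-block: after the opening $u^{j}$ one places at level $j$ either nothing or an $h$-starting path (the factor $1+Q$), then a free Motzkin path after each of the $j$ descents, the last of these being the unrestricted level-$0$ tail. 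No piece needs to be moved.

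Your arch-block identity $\mathcal{R}_i=t_i(xM)^i$ is true, but be careful with the bijection you sketch. In an arch-block the piece $N_i$ sitting at level $i$ after $u^i$ is a full Motzkin path that is empty or begins with $h$, not merely an $h$-run, while the level-$0$ piece $N_0$ must itself be empty or an arch-block (no horizontal steps at ground level). Naively this yields $t_i x^i(1+Q)M^{i-1}(1+\mathcal{R})$, which matches $t_ix^iM^i$ only via the identity $(1+Q)(1+\mathcal{R})=M$. Bijectively one must merge $N_i$ and $N_0$ into a single Motzkin path; concatenating as $N_0\cdot N_i$ (and inverting by cutting at the first level-$0$ horizontal step) works, but ``relocate any horizontal run'' undersells what must actually be moved. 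The paper's $P,Q$ formulation buys you exactly this: the $(1+Q)$ factor stays at the top and the level-$0$ tail is already an unrestricted Motzkin path, so nothing has to be repackaged.
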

\begin{proof}
Let $P:=P(x,y;{\bf t;s})$ and $Q:=Q(x,y;{\bf t;s})$ be the ordinary
generating functions for the set of Motzkin paths beginning with
up-steps and with horizontal steps respectively, according to the
statistics $u_1,u_2,\ldots$ and $h_1,h_2,\ldots$. Then $M(x,y;{\bf
t;s})$ satisfies
\begin{eqnarray}\label{eqn 2.2}
M(x,y;{\bf t;s})=1+P+Q.
\end{eqnarray}
Note that $P(x,y;{\bf t;s})$ can be written as
$$P(x,y;{\bf t;s})=\sum_{j\geq1}P_j(x,y;{\bf t;s}),$$ where
$P_j(x,y;{\bf t;s})$ is the ordinary generating function for the
number of Motzkin paths starting with $j$ up-steps according to the
statistics $u_1,u_2,\ldots$ and $h_1,h_2,\ldots$. An equation for
the generating function $P_j(x,z;{\bf t;s})$ is obtained from the
first return decomposition of a Motzkin path $M$ starting with a
$u$-segment of length $j$: either
$$M=u^jdP^{(j)}d\ldots dP^{(2)}dP^{(1)}\mbox{ or }M=
u^jQ^{(j+1)} d P^{(j)} d\ldots dP^{2}dP^{1},$$ where
$P^{(1)},\ldots,P^{(j)}$ are Motzkin paths and $Q^{(j+1)}$ is a
Motzkin path beginning with horizontal steps, see
Figure~\ref{fDD2.1}.
\begin{figure}[h]
\begin{pspicture}(6.2,2)
\psline[unit=18pt,linewidth=.5pt]{->}(0,0)(9.5,0)\psline[unit=18pt,linewidth=.5pt]{->}(0,0)(0,1.8)
\psline[unit=18pt,linewidth=.5pt](0,0)(1.75,1.5)(2,1.25)
\put(1.15,.65){\psline[unit=17pt,linewidth=1pt,fillstyle=solid,fillcolor=gray](1.25,.25)(.25,.25)(.5,.5)(.75,.45)(1,.5)(1.25,.25)}
\psline[unit=18pt,linewidth=.5pt](3,1.25)(3.25,1)\pscircle*[unit=18pt,linewidth=.5pt](3.36,.98){0.03}
\pscircle*[unit=18pt,linewidth=.5pt](3.5,.98){0.03}\pscircle*[unit=18pt,linewidth=.5pt](3.66,.98){0.03}
\psline[unit=18pt,linewidth=.5pt](3.75,1)(4,.75)
\multiput(-1.38,-.18)(.8,-.16){4}{\put(3.78,.5){\psline[unit=18pt,linewidth=1pt,fillstyle=solid,fillcolor=gray](1.25,.25)(.25,.25)(.5,.5)(.75,.45)(1,.5)(1.25,.25)}}
\multiput(0,0)(.8,-.16){3}{\psline[unit=18pt,linewidth=.5pt](5,.75)(5.25,.5)}
\put(1.4,1.05){\tiny$P^{(j)}$} \put(2.7,.73){\tiny$P^{(4)}$}
\setlength\unitlength{10pt}
\rput*[r](4.65,.55){\tiny$P^{(2)}$}\put(15,.7){\tiny$P^{(1)}$}
\end{pspicture}
\begin{pspicture}(6.2,2)
\psline[unit=18pt,linewidth=.5pt]{->}(0,0)(11,0)\psline[unit=18pt,linewidth=.5pt]{->}(0,0)(0,1.8)
\psline[unit=18pt,linewidth=.5pt](0,0)(1.75,1.5)(2.25,1.5)
\put(1.3,.8){\psline[unit=17pt,linewidth=1pt,fillstyle=solid,fillcolor=gray](1.25,.25)(.25,.25)(.5,.5)(.75,.45)(1,.5)(1.25,.25)}
\psline[unit=18pt,linewidth=.5pt](3.25,1.5)(3.5,1.25)\put(1.4,1.22){\tiny$Q^{(j+1)}$}
\put(2.1,.66){\psline[unit=17pt,linewidth=1pt,fillstyle=solid,fillcolor=gray](1.25,.25)(.25,.25)(.5,.5)(.75,.45)(1,.5)(1.25,.25)}
\put(.95,0){\psline[unit=18pt,linewidth=.5pt](3,1.25)(3.25,1)\pscircle*[unit=18pt,linewidth=.5pt](3.36,.98){0.03}
\pscircle*[unit=18pt,linewidth=.5pt](3.5,.98){0.03}\pscircle*[unit=18pt,linewidth=.5pt](3.66,.98){0.03}
\psline[unit=18pt,linewidth=.5pt](3.75,1)(4,.75)
\multiput(-1.38,-.18)(.8,-.16){4}{\put(3.78,.5){\psline[unit=18pt,linewidth=1pt,fillstyle=solid,fillcolor=gray](1.25,.25)(.25,.25)(.5,.5)(.75,.45)(1,.5)(1.25,.25)}}
\multiput(0,0)(.8,-.16){3}{\psline[unit=18pt,linewidth=.5pt](5,.75)(5.25,.5)}
\put(1.5,1.12){\tiny$P^{(j)}$}\put(2.7,.73){\tiny$P^{(4)}$}
\setlength\unitlength{10pt}
\rput*[r](4.65,.55){\tiny$P^{(2)}$}\put(15,.7){\tiny$P^{(1)}$}}
\end{pspicture}

\caption{First return decomposition of a Motzkin path starting with
exactly $j$ up-steps.}\label{fDD2.1}
\end{figure}
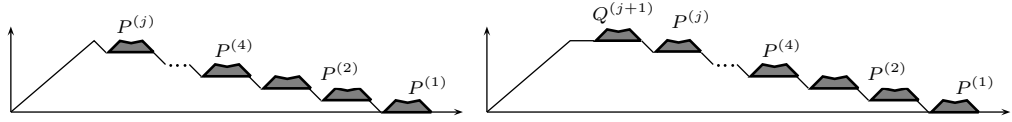

Thus $P_j(x,y;{\bf t})=(1+Q)x^{j}t_{j}M^{j}(x,y;{\bf t;s})$. Hence,
$P(x,y;{\bf t;s})$ satisfies
\begin{eqnarray}\label{eqn 2.3}
P(x,y;{\bf t;s})=(1+Q)\sum_{j\geq1}t_jx^{j}M^{j}(x,y;{\bf t;s}).
\end{eqnarray}
Similarly, one can derive that $Q(x,y;{\bf t;s})$ satisfies
\begin{eqnarray}\label{eqn 2.4}
Q(x,y;{\bf t;s})=(1+P)\sum_{j\geq1}s_jy^{j}.
\end{eqnarray}
Define $T(x)=1+\sum_{i\geq 1}t_ix^i$, $S(y)=1+\sum_{i\geq 1}s_iy^i$
and $z=xM(x,y;{\bf t;s})$. By (\ref{eqn 2.2})-(\ref{eqn 2.4}), one
can deduce (\ref{eqn 2.1}), as required.
\end{proof}

\begin{theorem}\label{theo 2.2}
For any integers $n,m,k\geq 0$, there holds
\begin{eqnarray*}
\lefteqn{\sum_{P\in\mathscr{M}_{n}^{m,k}}\prod_{i\geq1}t_i^{u_i(P)}\prod_{i\geq1}s_i^{h_i(P)}} \\
&=&\sum_{j=0}^{k}\sum_{\ell=j}^k(-1)^{\ell-j}{\binom{\ell-1}{\ell-j}\binom{m+j}{j}}\frac{{\bf
P}_{m}^{(m+j+1)}\big(1!t_1,2!t_2,\cdots\big)}{(m+1)!}\frac{\ell!{\bf
B}_{k,\ell}\big(1!s_1,2!s_2,\cdots\big)}{k!}.
\end{eqnarray*}
\end{theorem}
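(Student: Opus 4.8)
The plan is to feed the functional equation (\ref{eqn 2.1}) into the Lagrange inversion formula. Writing $\phi(z)=\frac{T(z)}{1-\frac{S(y)-1}{S(y)}T(z)}$ and recalling that $z=xM$, Proposition~\ref{pro2.1} says exactly that $M=\phi(z)$ with $z=x\phi(z)$. Thus $z=g(x)$ is the composition inverse, in the variable $x$ with $y$ treated as a parameter, of $f(z)=z/\phi(z)$, whose linear coefficient $1/\phi(0)=1/S(y)$ is nonzero, so (\ref{eqn 1.1}) applies. Since $M=z/x$ gives $[x^m]M=[x^{m+1}]z$, the inversion formula yields the compact expression $[x^m]M=\frac{1}{m+1}[z^m]\phi(z)^{m+1}$, valid for all $m\geq 0$.

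Next I would expand $\phi(z)^{m+1}$ by two successive applications of the generalized binomial series. First, $\phi(z)^{m+1}=T(z)^{m+1}\bigl(1-aT(z)\bigr)^{-(m+1)}$ with $a=\frac{S(y)-1}{S(y)}$ expands as $\sum_{j\geq0}\binom{m+j}{j}a^{j}T(z)^{m+1+j}$, which already produces the factors $\binom{m+j}{j}$ and $T(z)^{m+1+j}$ appearing in the claim. Second, and this is the key algebraic manoeuvre, I would rewrite $a^{j}=(S(y)-1)^{j}\bigl(1+(S(y)-1)\bigr)^{-j}$ and expand the negative power to obtain $a^{j}=\sum_{\ell\geq j}(-1)^{\ell-j}\binom{\ell-1}{\ell-j}(S(y)-1)^{\ell}$; this is precisely where the coefficients $(-1)^{\ell-j}\binom{\ell-1}{\ell-j}$ are born.

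Finally I would extract the two coefficients. The generating-function definition of the potential polynomials identifies $[z^m]T(z)^{m+1+j}=\frac{1}{m!}\textbf{P}_{m}^{(m+j+1)}(1!t_1,2!t_2,\dots)$, so that together with the prefactor $\frac{1}{m+1}$ one recovers the stated $\frac{1}{(m+1)!}$, while the standard generating function for partial Bell polynomials gives $[y^k](S(y)-1)^{\ell}=\frac{\ell!}{k!}\textbf{B}_{k,\ell}(1!s_1,2!s_2,\dots)$. Because $S(y)-1$ and $T(z)-1$ vanish at the origin, the sums over $\ell$ and $j$ are automatically truncated at $k$, collapsing the nominally infinite ranges to $0\leq j\leq \ell\leq k$ and reproducing the double sum in the statement.

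I expect the only genuine obstacle to be bookkeeping rather than ideas: one must check the composition-inverse setup and the applicability of (\ref{eqn 1.1}) with $y$ carried along as a parameter, and above all carry out the second binomial expansion so that the powers of $S(y)$ in the denominator of $a^{j}$ cancel cleanly and only the polynomial $(S(y)-1)^{\ell}$ survives, since the target formula contains neither $S(y)^{-j}$ nor a stray factor $S(y)^{m+1}$. A useful sanity check along the way is the case $m=0$, where $M(0,y)=S(y)$ forces the double sum to collapse, via $\sum_{r}(-1)^r\binom{\ell-1}{r}$, to the single surviving term $s_k$.
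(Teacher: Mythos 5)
Your proof is correct, and it shares the paper's skeleton: Lagrange inversion applied to (\ref{eqn 2.1}) in the form $M=\phi(z)$, $z=x\phi(z)$ with $\phi(z)=\frac{T(z)}{1-\frac{S(y)-1}{S(y)}T(z)}$, followed by the expansion $\phi(z)^{m+1}=\sum_{j\geq 0}\binom{m+j}{j}a^jT(z)^{m+j+1}$ with $a=\frac{S(y)-1}{S(y)}$. Where you genuinely diverge is in the treatment of the $y$-part, and your route is the cleaner one. The paper expands $a^j=(1-S(y)^{-1})^j=\sum_{i=0}^j(-1)^i\binom{j}{i}S(y)^{-i}$, identifies $[y^k]S(y)^{-i}$ with the negative-index potential polynomial $\mathbf{P}_k^{(-i)}(1!s_1,2!s_2,\cdots)/k!$, converts that to Bell polynomials via (\ref{eqn 1.3}), and then must invoke the identity (\ref{eqn vandcon}), which it states without proof, to collapse the resulting double sum over $i$ and $\ell$. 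You instead write $a^j=(S(y)-1)^j\bigl(1+(S(y)-1)\bigr)^{-j}$ and expand once to get $a^j=\sum_{\ell\geq j}(-1)^{\ell-j}\binom{\ell-1}{\ell-j}(S(y)-1)^{\ell}$, after which $[y^k](S(y)-1)^{\ell}=\ell!\,\mathbf{B}_{k,\ell}(1!s_1,2!s_2,\cdots)/k!$ is immediate from the generating function of partial Bell polynomials; no negative-index potential polynomials and no appeal to (\ref{eqn vandcon}) are needed. Indeed, equating your expansion of $a^j$ with the paper's yields a proof of (\ref{eqn vandcon}) as a byproduct, so your argument is self-contained where the paper's is not. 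What the paper's detour buys is reuse: (\ref{eqn vandcon}) and formula (\ref{eqn 1.3}) are invoked again later (e.g.\ for Corollary~\ref{coro 2.3} and the corollaries built on Lemmas~\ref{lemma 1.1} and~\ref{lemma 1.2}), whereas for Theorem~\ref{theo 2.2} in isolation your derivation is shorter. Your remaining bookkeeping is also sound: the truncation of $j$ and $\ell$ at $k$ follows exactly as you say because $S(y)-1$ has no constant term, and the $m=0$ check against $M(0,y)=S(y)$ works out via $\sum_{r}(-1)^r\binom{\ell-1}{r}=0$ for $\ell\geq 2$.
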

\begin{proof}
Applying the Lagrange inversion formula (\ref{eqn 1.2}) to (\ref{eqn
2.1}), by the identity
\begin{eqnarray}\label{eqn vandcon}
\sum_{i=0}^j(-1)^i\binom{j}{i}\binom{-i}{\ell}=(-1)^{\ell-j}\binom{\ell-1}{\ell-j},
\end{eqnarray}
we obtain
\begin{eqnarray*}
\lefteqn{\sum_{P\in\mathscr{M}_{n}^{m,k}}\prod_{i\geq1}t_i^{u_i(P)}\prod_{i\geq1}s_i^{h_i(P)}=
[x^{m+1}y^k]xM(x,y;{\bf t;s})=[x^{m+1}y^k]z }\\
&=&\frac{1}{m+1}[x^{m}y^k]\left\{\frac{T(x)}{1-\frac{S(y)-1}{S(y)}T(x)}\right\}^{m+1}\\
&=& \frac{1}{m+1}\sum_{j=0}^k{\binom{m+j}{j}}[x^{m}y^{k}]T(x)^{m+j+1}\left\{\frac{S(y)-1}{S(y)}\right\}^{j}\\
&=&\frac{1}{m+1}\sum_{j=0}^k\sum_{i=0}^{j}(-1)^i{\binom{m+j}{j}\binom{j}{i}}
[x^{m}]T(x)^{m+j+1}[y^{k}]S(y)^{-i}\\
&=&\sum_{j=0}^k\binom{m+j}{j}\frac{{\bf
P}_{m}^{(m+j+1)}\big(1!t_1,2!t_2,\cdots\big)}{(m+1)!}\sum_{i=0}^{j}(-1)^i\binom{j}{i}\frac{{\bf
P}_{k}^{(-i)}\big(1!s_1,2!s_2,\cdots\big)}{k!}\\
&=&\sum_{j=0}^k\binom{m+j}{j}\frac{{\bf
P}_{m}^{(m+j+1)}\big(1!t_1,2!t_2,\cdots\big)}{(m+1)!}\sum_{i=0}^{j}(-1)^i\binom{j}{i}\sum_{\ell=0}^k\binom{-i}{\ell}
\frac{\ell!{\bf B}_{k,\ell}\big(1!s_1,2!s_2,\cdots\big)}{k!}\\
&=&\sum_{j=0}^k\binom{m+j}{j}\frac{{\bf
P}_{m}^{(m+j+1)}\big(1!t_1,2!t_2,\cdots\big)}{(m+1)!}\sum_{\ell=0}^k\frac{\ell!{\bf
B}_{k,\ell}\big(1!s_1,2!s_2,\cdots\big)}{k!}\sum_{i=0}^{j}(-1)^i\binom{j}{i}\binom{-i}{\ell}\\
&=&\sum_{j=0}^k\binom{m+j}{j}\frac{{\bf
P}_{m}^{(m+j+1)}\big(1!t_1,2!t_2,\cdots\big)}{(m+1)!}\sum_{\ell=0}^k\frac{\ell!{\bf
B}_{k,\ell}\big(1!s_1,2!s_2,\cdots\big)}{k!}(-1)^{\ell-j}\binom{\ell-1}{\ell-j}\\
&=&\sum_{j=0}^{k}\sum_{\ell=j}^k(-1)^{\ell-j}{\binom{\ell-1}{\ell-j}\binom{m+j}{j}}\frac{{\bf
P}_{m}^{(m+j+1)}\big(1!t_1,2!t_2,\cdots\big)}{(m+1)!}\frac{\ell!{\bf
B}_{k,\ell}\big(1!s_1,2!s_2,\cdots\big)}{k!},
\end{eqnarray*}
as claimed.
\end{proof}

Let $\mathscr{M}_{n,r,\ell}^{m,k}$ be the subset of
$\mathscr{M}_{n}^{m,k}$ with $r$ number of $u$-segments and $\ell$
number of $h$-segments. Note that ${\bf
B}_{m,r}\big(1!qt_1,2!qt_2,\cdots\big)=q^r{\bf
B}_{m,r}\big(1!t_1,2!t_2,\cdots\big)$ by (\ref{eqn 1.5}), combining
(\ref{eqn 1.3}) and (\ref{eqn vandcon}). Then Theorem \ref{theo 2.2}
produces
\begin{corollary}\label{coro 2.3}
For any integers $n,m,r,k,\ell\geq 0$, there holds
\begin{eqnarray*}
\sum_{P\in\mathscr{M}_{n,r,\ell}^{m,k}}\prod_{i\geq1}t_i^{u_i(P)}\prod_{i\geq1}s_i^{h_i(P)}
=\frac{r!\ell!V_{m,k}^{r,\ell}}{k!(m+1)!}{\bf
B}_{m,r}\big(1!t_1,2!t_2,\cdots\big){\bf
B}_{k,\ell}\big(1!s_1,2!s_2,\cdots\big),
\end{eqnarray*}
where
\begin{eqnarray*}
V_{m,k}^{r,\ell}=\sum_{j=0}^{k}(-1)^{\ell-j}\binom{\ell-1}{\ell-j}
{\binom{m+j}{m}}{\binom{m+j+1}{r}}.
\end{eqnarray*}
\end{corollary}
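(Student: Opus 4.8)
The plan is to extract from Theorem \ref{theo 2.2} the part of the weighted generating function that records only those Motzkin paths in $\mathscr{M}_{n}^{m,k}$ having exactly $r$ $u$-segments and $\ell$ $h$-segments. The key observation is that in the weight $\prod_{i\geq1}t_i^{u_i(P)}\prod_{i\geq1}s_i^{h_i(P)}$ the total degree in the $t$-variables equals $\sum_{i\geq1}u_i(P)$, the total number of $u$-segments of $P$, and the total degree in the $s$-variables equals $\sum_{i\geq1}h_i(P)$, the total number of $h$-segments. Hence, to isolate $\mathscr{M}_{n,r,\ell}^{m,k}$ it suffices to pick out the component of the right-hand side of Theorem \ref{theo 2.2} that is homogeneous of degree $r$ in $t_1,t_2,\dots$ and of degree $\ell$ in $s_1,s_2,\dots$.

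To make this mechanical I would introduce two marking variables $q$ and $p$ and replace $t_i\mapsto qt_i$ and $s_i\mapsto ps_i$ throughout the identity of Theorem \ref{theo 2.2}. The left-hand side then becomes $\sum_{P}q^{\sum_i u_i(P)}p^{\sum_i h_i(P)}\prod_i t_i^{u_i(P)}\prod_i s_i^{h_i(P)}$, whose coefficient of $q^{r}p^{\ell}$ is precisely $\sum_{P\in\mathscr{M}_{n,r,\ell}^{m,k}}\prod_i t_i^{u_i(P)}\prod_i s_i^{h_i(P)}$. On the right-hand side I would invoke the stated homogeneity $\mathbf{B}_{n,j}(1!qx_1,2!qx_2,\dots)=q^{j}\mathbf{B}_{n,j}(1!x_1,2!x_2,\dots)$ together with the Bell-polynomial expansion (\ref{eqn 1.3}) of the potential polynomial, which yields $\mathbf{P}_m^{(m+j+1)}(1!qt_1,2!qt_2,\dots)=\sum_{r'}\binom{m+j+1}{r'}r'!\,q^{r'}\mathbf{B}_{m,r'}(1!t_1,2!t_2,\dots)$, and similarly $\ell'!\,\mathbf{B}_{k,\ell'}(1!ps_1,\dots)=p^{\ell'}\ell'!\,\mathbf{B}_{k,\ell'}(1!s_1,\dots)$.

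Extracting $[q^{r}p^{\ell}]$ then collapses both layers simultaneously: the inner Bell-polynomial sum coming from the potential polynomial retains only the $r'=r$ term, contributing $\binom{m+j+1}{r}r!\,\mathbf{B}_{m,r}(1!t_1,\dots)$, while the outer sum over $\ell'$ in Theorem \ref{theo 2.2} retains only the $\ell'=\ell$ term, contributing $\ell!\,\mathbf{B}_{k,\ell}(1!s_1,\dots)$. After this the only surviving summation is over $j$, and factoring out the ($j$-independent) pieces $\frac{r!\,\ell!}{(m+1)!\,k!}\mathbf{B}_{m,r}(1!t_1,\dots)\mathbf{B}_{k,\ell}(1!s_1,\dots)$ leaves exactly $\sum_{j=0}^{k}(-1)^{\ell-j}\binom{\ell-1}{\ell-j}\binom{m+j}{m}\binom{m+j+1}{r}$, which is the definition of $V_{m,k}^{r,\ell}$ (using $\binom{m+j}{j}=\binom{m+j}{m}$, and noting that $\binom{\ell-1}{\ell-j}$ already vanishes when $j>\ell$, so the lower range $\ell\geq j$ in Theorem \ref{theo 2.2} may be harmlessly replaced by $0\leq j\leq k$).

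I would not expect a genuine obstacle, since the result is a coefficient extraction from an identity already established; the only point requiring care is the homogeneity bookkeeping — confirming that ``degree $r$ in the $t$-variables'' really corresponds to ``$r$ $u$-segments'' and that the potential polynomial's Bell expansion sorts these degrees cleanly according to the second Bell index. One should also check the indexing conventions (for instance that $\mathbf{B}_{m,r}$ vanishes outside $1\leq r\leq m$, so that the boundary cases $r=0$ or $m=0$ behave correctly), but these are routine verifications rather than conceptual difficulties.
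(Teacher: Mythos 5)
Your proof is correct and follows essentially the same route as the paper: the paper's own (very terse) justification of Corollary \ref{coro 2.3} is precisely the homogeneity ${\bf B}_{m,r}(1!qt_1,2!qt_2,\cdots)=q^r{\bf B}_{m,r}(1!t_1,2!t_2,\cdots)$ combined with the expansion (\ref{eqn 1.3}) of the potential polynomial, applied to Theorem \ref{theo 2.2} to pick out the components homogeneous of degree $r$ in the $t_i$ and degree $\ell$ in the $s_i$. Your explicit marker variables $q,p$ and the observation that $\binom{\ell-1}{\ell-j}$ kills the terms with $j>\ell$ (so the range of $j$ can be taken as $0\leq j\leq k$) simply spell out the details the paper leaves implicit.
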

Recall that
\begin{eqnarray*}
{\bf
B}_{m,r}\big(x_1,x_2,\cdots\big)=\sum_{\sigma_m(r)}\frac{m!}{r_1!r_2!\cdots
r_m!}
\left(\frac{x_1}{1!}\right)^{r_1}\left(\frac{x_2}{2!}\right)^{r_2}\cdots\left(\frac{x_m}{m!}\right)^{r_m},
\end{eqnarray*}
where the summation $\sigma_m(r)$ is for all the nonnegative integer
solutions of $r_1+r_2+\cdots+r_m=r$ and $r_1+2r_2+\cdots +mr_m=m$.

If  comparing the coefficient of $t_1^{r_1}t_2^{r_2}\cdots
t_m^{r_m}s_1^{\ell_1}s_2^{\ell_2}\cdots s_k^{\ell_k}$ in Corollary
\ref{coro 2.3}, one can obtain that
\begin{corollary}
The number of Motzkin paths in $\mathscr{M}_{n,r,\ell}^{m,k}$ with a
number $r$ of $u$-segments of type $1^{r_1}2^{r_2}\cdots m^{r_m}$
and a number $\ell$ of $h$-segments of type
$1^{\ell_1}2^{\ell_2}\cdots k^{\ell_k}$ is
\begin{eqnarray*}
\frac{1}{m+1}\binom{r}{r_1,r_2,\cdots,r_m}\binom{\ell}{\ell_1,\ell_2,\cdots,\ell_k}
V_{m,k}^{r,\ell}.
\end{eqnarray*}
\end{corollary}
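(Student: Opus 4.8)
The plan is to read off the stated count as a single coefficient extraction from Corollary~\ref{coro 2.3}, so that no new combinatorial argument is needed beyond interpreting the two sides of that identity. On the left-hand side, the generating function $\sum_{P\in\mathscr{M}_{n,r,\ell}^{m,k}}\prod_{i\geq1}t_i^{u_i(P)}\prod_{i\geq1}s_i^{h_i(P)}$ records, monomial by monomial, the full segment-type data of each path: the coefficient of $t_1^{r_1}t_2^{r_2}\cdots t_m^{r_m}s_1^{\ell_1}s_2^{\ell_2}\cdots s_k^{\ell_k}$ is exactly the number of paths $P$ in $\mathscr{M}_{n,r,\ell}^{m,k}$ with $u_i(P)=r_i$ and $h_i(P)=\ell_i$ for every $i$, i.e.\ those of $u$-segment type $1^{r_1}2^{r_2}\cdots m^{r_m}$ and $h$-segment type $1^{\ell_1}2^{\ell_2}\cdots k^{\ell_k}$. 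This is precisely the quantity to be computed.

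First I would extract the same coefficient from the right-hand side of Corollary~\ref{coro 2.3}, using the explicit formula (\ref{eqn 1.5}) for the partial Bell polynomial. Substituting $x_i=i!\,t_i$ makes each factor $(x_i/i!)^{r_i}$ collapse to $t_i^{r_i}$, so
\begin{eqnarray*}
{\bf B}_{m,r}\big(1!t_1,2!t_2,\cdots\big)=\sum_{\sigma_m(r)}\frac{m!}{r_1!r_2!\cdots r_m!}\,t_1^{r_1}t_2^{r_2}\cdots t_m^{r_m},
\end{eqnarray*}
and likewise for ${\bf B}_{k,\ell}\big(1!s_1,2!s_2,\cdots\big)$ in the variables $s_i$. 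Hence the coefficient of $t_1^{r_1}\cdots t_m^{r_m}$ in the first Bell polynomial is $m!/(r_1!\cdots r_m!)$ and the coefficient of $s_1^{\ell_1}\cdots s_k^{\ell_k}$ in the second is $k!/(\ell_1!\cdots\ell_k!)$.

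Multiplying by the scalar prefactor $r!\ell!V_{m,k}^{r,\ell}/(k!(m+1)!)$ and simplifying, I would cancel the two occurrences of $k!$, replace $m!/(m+1)!$ by $1/(m+1)$, and recognise the remaining factorial ratios as the multinomial coefficients $r!/(r_1!\cdots r_m!)=\binom{r}{r_1,\ldots,r_m}$ and $\ell!/(\ell_1!\cdots\ell_k!)=\binom{\ell}{\ell_1,\ldots,\ell_k}$. This yields exactly
\begin{eqnarray*}
\frac{1}{m+1}\binom{r}{r_1,r_2,\cdots,r_m}\binom{\ell}{\ell_1,\ell_2,\cdots,\ell_k}V_{m,k}^{r,\ell},
\end{eqnarray*}
matching the claimed formula.

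The only point that needs care---and it is a consistency check rather than a genuine obstacle---is the summation range $\sigma_m(r)$ in (\ref{eqn 1.5}): the monomial $t_1^{r_1}\cdots t_m^{r_m}$ contributes to ${\bf B}_{m,r}$ only when $r_1+\cdots+r_m=r$ and $r_1+2r_2+\cdots+mr_m=m$. But these are exactly the defining conditions for a path in $\mathscr{M}_{n,r,\ell}^{m,k}$ to have $u$-segment type $1^{r_1}\cdots m^{r_m}$: the number of $u$-segments equals $\sum_i r_i=r$, while the total number of up-steps equals $\sum_i i\,r_i=m$. The analogous conditions govern the $h$-segments. Thus the admissible exponents on the two sides coincide, and the coefficient comparison is legitimate for every relevant monomial, which completes the proof.
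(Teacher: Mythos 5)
Your proposal is correct and is exactly the paper's argument: the paper likewise recalls the explicit formula (\ref{eqn 1.5}) for the partial Bell polynomials and obtains the corollary by comparing the coefficient of $t_1^{r_1}\cdots t_m^{r_m}s_1^{\ell_1}\cdots s_k^{\ell_k}$ on both sides of Corollary~\ref{coro 2.3}, with the factorials collapsing to $\frac{1}{m+1}\binom{r}{r_1,\ldots,r_m}\binom{\ell}{\ell_1,\ldots,\ell_k}V_{m,k}^{r,\ell}$ just as you compute. Your closing consistency check on the summation constraints in $\sigma_m(r)$ is a detail the paper leaves implicit, but it changes nothing about the approach.
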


Next, specialization for $T(x)$ and $S(y)$ are considered, which
generate several interesting results, as described in
Examples~\ref{ex1}-\ref{ex2}.

\begin{example}\label{ex1} Let $T(x)=e^x, S(y)=e^{y}$, that is, $t_i=s_i={1/i!}$ for all
$i\geq 1$. And Stirling numbers $S(k,i)$ of the second kind satisfy
$(e^x-1)^i/i!=\sum_{k\geq i}S(k,i){x^k/k!}$. Then Theorem \ref{theo
2.2} gives
\begin{eqnarray*}
\sum_{P\in\mathscr{M}_{n}^{m,k}}\prod_{i\geq1}\left\{\frac{1}{i!}\right\}^{u_i(P)+h_i(P)}
=\sum_{j=0}^k(-1)^{k-j}\binom{m+j}{j}\frac{j!(m+j+1)^m}{k!(m+1)!}S(k,j).
\end{eqnarray*}
Note that ${\bf B}_{k,i}\big(1,1,1,\cdots\big)=S(k,i)$
\cite[pp.135]{comtet}, by Corollary \ref{coro 2.3}, we have
\begin{eqnarray*}
\sum_{P\in\mathscr{M}_{n,r,\ell}^{m,k}}\prod_{i\geq1}\left\{\frac{1}{i!}\right\}^{u_i(P)+h_i(P)}
=\frac{r!\ell!V_{m,k}^{r,\ell}}{k!(m+1)!}S(m,r)S(k,\ell).
\end{eqnarray*}
\end{example}

\begin{example}\label{ex2} Let $T(x)=f(x), S(y)=g(y)$, where $f(x), g(y)$ are the
generating function for the complete $b$-ary and $d$-ary plane trees
(see, for instance, \cite{klarner} and \cite[pp. 112-113]{goulden}),
which satisfies the relations $f(x)=1+xf^b(x)$ and $g(y)=1+yg^d(y)$
respectively. By the Lagrange inversion formula (\ref{eqn 1.2}), one
can deduce $t_i=\frac{1}{bi+1}\binom{bi+1}{i}$ and
$s_i=\frac{1}{di+1}\binom{di+1}{i}$. Then Theorem \ref{theo 2.2}
leads to
\begin{eqnarray*}
\lefteqn{\sum_{P\in\mathscr{M}_{n}^{m,k}}\prod_{i\geq1}\left\{\frac{1}{bi+1}\binom{bi+1}{i}\right\}^{u_i(P)}
\prod_{i\geq1}\left\{\frac{1}{di+1}\binom{di+1}{i}\right\}^{h_i(P)} }\\
&=&\frac{1}{m+1}\sum_{j=0}^k{\binom{m+j}{j}}\frac{m+j+1}{(b+1)m+j+1}
\binom{(b+1)m+j+1}{m}\frac{dj-j}{dk-j}\binom{dk-j}{k-j},
\end{eqnarray*}
which, in the case $d=1$, generates
\begin{eqnarray*}
\sum_{P\in\mathscr{M}_{n}^{m,k}}\prod_{i\geq1}\left\{\frac{1}{bi+1}\binom{bi+1}{i}\right\}^{u_i(P)}
=\frac{1}{(b+1)m+k+1}{\binom{m+k+1}{k}}\binom{(b+1)m+k+1}{m}.
\end{eqnarray*}
\end{example}

Recently, Abbas and Bouroubi \cite{abbas} derived two new identities
for Bell polynomials, that is,
\begin{lemma}\label{lemma 1.1}
Let $f(x)=1+\sum_{i\geq 1}f_ix^i$ be any analytic function about the
origin and define $f_m(i)=D^m[f(x)]^i|_{x=0}$, where $D$ is the
differential operator $d/dx$. Then for any integers $m\geq r\geq 1$,
there holds,
\begin{eqnarray}\label{eqn abbas1}
{\bf{B}}_{m,r}(1,f_1(2),f_2(3),\cdots)=\binom{m-1}{r-1}f_{m-r}(m).
\end{eqnarray}
\end{lemma}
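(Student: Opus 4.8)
The plan is to pass to exponential generating functions and to recognize the list of arguments of the Bell polynomial as the Taylor coefficients of a Lagrange-inverse. First I would read off the arguments: in ${\bf B}_{m,r}(1,f_1(2),f_2(3),\ldots)$ the $j$-th argument is
\begin{eqnarray*}
x_j=f_{j-1}(j)=D^{j-1}[f(x)]^j\big|_{x=0}=(j-1)!\,[u^{j-1}]f(u)^j,
\end{eqnarray*}
so that $x_j/j!=\frac{1}{j}[u^{j-1}]f(u)^j$. This is precisely the Lagrange inversion coefficient (\ref{eqn 1.1}): letting $w=w(t)$ be the formal power series determined by $w=t\,f(w)$ (well defined since $f(0)=1\neq0$), one has $t=w/f(w)$, and inverting this relation through (\ref{eqn 1.1}) gives $[t^j]w(t)=\frac{1}{j}[u^{j-1}]f(u)^j=x_j/j!$. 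Hence the inner series attached to the arguments is exactly $w$, namely $\sum_{j\geq1}x_j\,t^j/j!=w(t)$.

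Next I would invoke the (defining) exponential generating function of the partial Bell polynomials,
\begin{eqnarray*}
\sum_{m\geq r}{\bf B}_{m,r}(x_1,x_2,\ldots)\frac{t^m}{m!}=\frac{1}{r!}\left(\sum_{j\geq1}x_j\frac{t^j}{j!}\right)^{r}=\frac{w(t)^r}{r!},
\end{eqnarray*}
so that ${\bf B}_{m,r}(1,f_1(2),\ldots)=\frac{m!}{r!}[t^m]w(t)^r$. It then remains only to extract $[t^m]w(t)^r$, and this is again a Lagrange computation. Applying the general formula (\ref{eqn 1.2}) adapted to the equation $w=t\,f(w)$, with $\Phi(u)=u^r$ and $\Phi'(u)=r\,u^{r-1}$ (here the ratio $(u/\cdot)^m$ appearing in (\ref{eqn 1.2}) becomes $f(u)^m$), yields
\begin{eqnarray*}
[t^m]w(t)^r=\frac{1}{m}[u^{m-1}]\,r\,u^{r-1}f(u)^m=\frac{r}{m}[u^{m-r}]f(u)^m.
\end{eqnarray*}

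Finally I would assemble the pieces. Combining the last two displays gives
\begin{eqnarray*}
{\bf B}_{m,r}(1,f_1(2),f_2(3),\ldots)=\frac{m!}{r!}\cdot\frac{r}{m}\,[u^{m-r}]f(u)^m=\frac{(m-1)!}{(r-1)!}\,[u^{m-r}]f(u)^m,
\end{eqnarray*}
and since $f_{m-r}(m)=D^{m-r}[f(x)]^m\big|_{x=0}=(m-r)!\,[u^{m-r}]f(u)^m$, the right-hand side equals $\binom{m-1}{r-1}f_{m-r}(m)$, as claimed. The only genuine obstacle is the opening step: spotting that the prescribed arguments $x_j=f_{j-1}(j)$ are exactly the coefficients of the Lagrange solution $w=t\,f(w)$. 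Once that identification is in place, both coefficient extractions are direct invocations of (\ref{eqn 1.1})--(\ref{eqn 1.2}) already recorded in the Introduction, and the rest is routine bookkeeping with factorials.
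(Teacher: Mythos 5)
Your argument is correct in every step, but note that the paper itself does not prove this lemma at all: it is imported verbatim from Abbas and Bouroubi \cite{abbas}, so there is no internal proof to compare yours against. What you have produced is a valid self-contained proof that, moreover, fits naturally into the paper's own toolkit. The three ingredients all check out: (i) the identification $x_j=f_{j-1}(j)=(j-1)!\,[u^{j-1}]f(u)^j$, so that $x_j/j!=\frac{1}{j}[u^{j-1}]f(u)^j$ is, by (\ref{eqn 1.1}) applied to $F(x)=x/f(x)$ (legitimate since $f(0)=1$ makes the linear coefficient of $F$ nonzero), the $j$-th coefficient of the solution $w(t)$ of $w=t\,f(w)$; (ii) the exponential generating function
\begin{eqnarray*}
\sum_{m\geq r}{\bf B}_{m,r}(x_1,x_2,\dots)\frac{t^m}{m!}=\frac{1}{r!}\left(\sum_{j\geq1}x_j\frac{t^j}{j!}\right)^{r},
\end{eqnarray*}
which the paper never states explicitly but which follows from (\ref{eqn 1.5}) by the multinomial theorem --- this is the one fact you invoke that is not literally available in the paper, so you should verify or cite it (it is in Comtet \cite{comtet}); and (iii) the extraction $[t^m]w(t)^r=\frac{r}{m}[u^{m-r}]f(u)^m$ via (\ref{eqn 1.2}) with $\Phi(u)=u^r$. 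The final bookkeeping $\frac{m!}{r!}\cdot\frac{r}{m}=\frac{(m-1)!}{(r-1)!}$ together with $f_{m-r}(m)=(m-r)!\,[u^{m-r}]f(u)^m$ indeed yields $\binom{m-1}{r-1}f_{m-r}(m)$, including the boundary case $m=r$. Incidentally, your route makes transparent why this lemma meshes so smoothly with Theorem \ref{theo 2.2} and Example \ref{ex 2.10}: all of them are instances of the same Lagrange-inversion mechanism.
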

\begin{lemma}\label{lemma 1.2}
Let $\{\phi_n(x)\}_{n\geq 0}$ be a binomial sequence. Then for any
integers $m\geq r\geq 1$, there holds,
\begin{eqnarray}\label{eqn abbas2}
{\bf{B}}_{m,r}(1,2\phi_1(1),3\phi_2(1),\cdots)=\binom{m}{r}\phi_{m-r}(r).
\end{eqnarray}
\end{lemma}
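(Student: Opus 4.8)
The plan is to prove the identity by a direct exponential generating function (EGF) computation, exploiting the two structural facts at our disposal: the EGF characterization of a binomial sequence and the EGF of the partial Bell polynomials. Recall that a polynomial sequence $\{\phi_n(x)\}_{n\geq 0}$ is of binomial type precisely when $\phi_0(x)=1$ and there is a formal power series $f(t)=\sum_{j\geq 1}f_j t^j$ (with $f_1\neq 0$) such that
$$\sum_{n\geq 0}\phi_n(x)\frac{t^n}{n!}=e^{xf(t)}.$$
On the other side, the partial Bell polynomials of (\ref{eqn 1.5}) are generated by
$$\sum_{m\geq r}{\bf B}_{m,r}(x_1,x_2,\dots)\frac{t^m}{m!}=\frac{1}{r!}\Big(\sum_{j\geq 1}x_j\frac{t^j}{j!}\Big)^{r}.$$
These two expansions are the only inputs I expect to need.

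First I would assemble the inner argument series for the specific substitution $x_j=j\,\phi_{j-1}(1)$ (noting that $x_1=\phi_0(1)=1$ is consistent with the stated first argument). The point is that the prefactor $j$ cancels one factorial and produces an index shift:
$$\sum_{j\geq 1}x_j\frac{t^j}{j!}=\sum_{j\geq 1}\phi_{j-1}(1)\frac{t^{j}}{(j-1)!}=t\sum_{i\geq 0}\phi_i(1)\frac{t^{i}}{i!}=t\,e^{f(t)},$$
where the last equality is the binomial-type EGF evaluated at $x=1$. Substituting into the Bell generating function gives
$$\sum_{m\geq r}{\bf B}_{m,r}(1,2\phi_1(1),3\phi_2(1),\dots)\frac{t^m}{m!}=\frac{1}{r!}\big(t\,e^{f(t)}\big)^{r}=\frac{t^{r}}{r!}\,e^{rf(t)}.$$

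The next step is to read off the right-hand side. Applying the binomial-type EGF a second time, now at the integer argument $x=r$, we have $e^{rf(t)}=\sum_{n\geq 0}\phi_n(r)t^n/n!$, so the product becomes $\frac{1}{r!}\sum_{n\geq 0}\phi_n(r)\,t^{n+r}/n!$. Setting $m=n+r$ and comparing the coefficient of $t^m/m!$ on both sides yields
$${\bf B}_{m,r}(1,2\phi_1(1),3\phi_2(1),\dots)=\frac{m!}{r!\,(m-r)!}\,\phi_{m-r}(r)=\binom{m}{r}\phi_{m-r}(r),$$
which is the asserted formula.

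The computation is short, and I do not anticipate a genuine obstacle; the one place demanding care is the bookkeeping of the index shift $j\mapsto j-1$ that extracts the factor $t$ and turns $j\,\phi_{j-1}(1)$ into the clean EGF $t\,e^{f(t)}$. The other subtlety worth flagging is that the defining relation of a binomial sequence must be invoked at two different values of the indeterminate — at $x=1$ to evaluate the arguments and at $x=r$ to recognize the final EGF — and it is precisely this double use that makes the evaluated polynomial $\phi_{m-r}(r)$ appear on the right.
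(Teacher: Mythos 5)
Your proof is correct. Note, however, that the paper itself offers no proof of this lemma to compare against: it is quoted verbatim as a known identity of Abbas and Bouroubi (reference [abbas], \emph{Discr.\ Math.} \textbf{293} (2005)), so your argument supplies a derivation where the paper has only a citation. Your route is the natural generating-function one, and every step checks out: the product formula
$$\sum_{m\geq r}{\bf B}_{m,r}(x_1,x_2,\dots)\frac{t^m}{m!}=\frac{1}{r!}\Bigl(\sum_{j\geq 1}x_j\frac{t^j}{j!}\Bigr)^{r}$$
is an immediate consequence of the paper's explicit formula (\ref{eqn 1.5}) via the multinomial theorem; the substitution $x_j=j\phi_{j-1}(1)$ correctly collapses to $t\,e^{\lambda(t)}$ by the index shift; and the second invocation of the binomial-type EGF at $x=r$, namely $e^{r\lambda(t)}=\sum_{n\geq 0}\phi_n(r)t^n/n!$, is exactly the equivalence the paper records in its definition of binomial sequences. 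Extracting the coefficient of $t^m/m!$ then gives $\binom{m}{r}\phi_{m-r}(r)$ as claimed, and the identity holds trivially for $m<r$ since both sides vanish. The only presentational suggestion: you state the Bell-polynomial generating function as a known input, which is fine, but since the paper only records the explicit sum (\ref{eqn 1.5}), a one-line remark that the generating function follows from it by the multinomial theorem would make the proof fully self-contained relative to the paper.
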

Recall that a sequence of polynomials $\{\phi_n(x)\}_{n\geq 0}$ with
$\phi_0(x)=1$ is called binomial if
\begin{eqnarray*}
\phi_n(x+y)=\sum_{i=0}^n\binom{n}{i}\phi_i(x)\phi_{n-i}(y),
\end{eqnarray*}
or equivalently, there exists a power series $\lambda(u)=\sum_{i\geq
1}\lambda_iu^i$ with $\lambda_1\neq 0$ such that
\begin{eqnarray*}
\sum_{n\geq 0}\phi_n(x)\frac{u^n}{n!}=\exp{(x\lambda(u))}.
\end{eqnarray*}
For examples, the following binomial sequences are well known
\cite{stanley},
\begin{itemize}
\item Power polynomials $\phi_n(x)=x^n$;
\item Factorial polynomials $\phi_n(x)=x(x+1)\cdots(x+n-1)$;
\item Abel polynomials $\phi_n(x)=x(x-qn)^{n-1}$ for fixed $q$;
\item Exponential polynomials $\phi_n(x)=\sum_{i=0}^nS(n,i)x^i$.
\end{itemize}

Let $t_i=\frac{f_i(i+1)}{(i+1)!}$ and $s_i=\frac{g_{i-1}(i)}{i!}$,
where $g_m(i)=D^m[g(x)]^i|_{x=0}$ and $g(x)=1+\sum_{i\geq 1}g_ix^i$,
using (\ref{eqn 1.4}) and (\ref{eqn vandcon}), by Theorem \ref{theo
2.2} and Lemma \ref{lemma 1.1}, one can deduce that
\begin{corollary}
For any integers $n,m,k\geq 0$, there holds
\begin{eqnarray*}
\lefteqn{\sum_{P\in\mathscr{M}_{n}^{m,k}}\prod_{i\geq1}\left\{\frac{f_i(i+1)}{(i+1)!}\right\}^{u_i(P)}
\prod_{i\geq1}\left\{\frac{g_{i-1}(i)}{i!}\right\}^{h_i(P)} }\\
&=&\sum_{j=0}^k\sum_{\ell=j}^{k}(-1)^{\ell-j}\binom{\ell}{j}\frac{g_{k-\ell}(k)}{(k-\ell)!}
\frac{j(m+j+1)}{k(2m+j+1)}{\binom{m+j}{j}}\frac{f_m(2m+j+1)}{(m+1)!},
\end{eqnarray*}
which, in the case $g_{m}(i)=\binom{i}{m}m!$ for all $i\geq 1$, by
the identity
\begin{eqnarray}\label{eqn 2.8}
\sum_{\ell=j}^{k}(-1)^{\ell-j}\binom{\ell}{j}\binom{k}{\ell}=\delta_{k,j},
\end{eqnarray}
leads to
\begin{eqnarray}\label{eqn 2.9}
\sum_{P\in\mathscr{M}_{n}^{m,k}}\prod_{i\geq1}\left\{\frac{f_i(i+1)}{(i+1)!}\right\}^{u_i(P)}
&=&\frac{1}{2m+k+1}{\binom{m+k+1}{k}}\frac{f_m(2m+k+1)}{m!}.
\end{eqnarray}
\end{corollary}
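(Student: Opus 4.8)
The plan is to start from the closed form in Theorem \ref{theo 2.2} and to evaluate its two ingredients---the potential polynomial ${\bf P}_{m}^{(m+j+1)}$ and the partial Bell polynomial ${\bf B}_{k,\ell}$---separately, exploiting the special shape that the arguments acquire under the substitutions $t_i=f_i(i+1)/(i+1)!$ and $s_i=g_{i-1}(i)/i!$.

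First I would rewrite the potential polynomial via (\ref{eqn 1.4}). Applied to the sequence $(1!t_1,2!t_2,\dots)$, it turns ${\bf P}_{m}^{(m+j+1)}(1!t_1,2!t_2,\dots)$ into $\binom{2m+j+1}{m+j+1}^{-1}{\bf B}_{2m+j+1,\,m+j+1}(1,2\cdot1!t_1,3\cdot2!t_2,\dots)$. The key observation is that the $p$-th argument of this Bell polynomial is $p\,(p-1)!\,t_{p-1}=p!\,t_{p-1}=f_{p-1}(p)$, with $f_0(1)=f(0)=1$ supplying the leading $1$, so the arguments are exactly $(1,f_1(2),f_2(3),\dots)$. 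That is precisely the input of Lemma \ref{lemma 1.1}, which collapses the Bell polynomial to $\binom{2m+j}{m+j}f_m(2m+j+1)$. A short simplification of $\binom{2m+j+1}{m+j+1}^{-1}\binom{2m+j}{m+j}$ then yields $\frac{m+j+1}{2m+j+1}f_m(2m+j+1)$ for the whole potential polynomial.

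The Bell polynomial in the $h$-segment factor is handled identically: the arguments $1!s_1,2!s_2,\dots$ satisfy $p!\,s_p=g_{p-1}(p)$, so ${\bf B}_{k,\ell}(1!s_1,2!s_2,\dots)={\bf B}_{k,\ell}(1,g_1(2),g_2(3),\dots)$, and Lemma \ref{lemma 1.1} (now applied to $g$) turns it into $\binom{k-1}{\ell-1}g_{k-\ell}(k)$. Feeding both evaluations back into Theorem \ref{theo 2.2} and using the elementary identity $\ell\binom{\ell-1}{\ell-j}=j\binom{\ell}{j}$ to merge the surviving binomial factors yields the asserted general formula. The one point to watch is the bookkeeping of binomial coefficients: Theorem \ref{theo 2.2} carries $\binom{\ell-1}{\ell-j}$ whereas the statement is written with $\binom{\ell}{j}$, and these are reconciled precisely by that identity together with the factors $\ell/k$ and $(m+j+1)/(2m+j+1)$ extracted from the two evaluations.

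Finally, for the specialization $g_m(i)=\binom{i}{m}m!$ I would substitute $g_{k-\ell}(k)/(k-\ell)!=\binom{k}{\ell}$ into the general formula. Since the $j$-dependent factor does not involve $\ell$, the inner sum reduces to $\sum_{\ell=j}^{k}(-1)^{\ell-j}\binom{\ell}{j}\binom{k}{\ell}$, which by (\ref{eqn 2.8}) equals $\delta_{k,j}$; hence only the term $j=k$ survives. Setting $j=k$ and simplifying $\frac{m+k+1}{2m+k+1}\binom{m+k}{k}\frac{1}{(m+1)!}=\frac{1}{2m+k+1}\binom{m+k+1}{k}\frac{1}{m!}$ produces (\ref{eqn 2.9}). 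I expect the main (though modest) obstacle to be the first step: recognizing that after (\ref{eqn 1.4}) the arguments of both polynomials align exactly with the hypotheses of Lemma \ref{lemma 1.1}, so that the partial Bell polynomials evaluate to single binomial-times-derivative terms rather than remaining as sums over partitions.
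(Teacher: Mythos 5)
Your proposal is correct and follows exactly the route the paper intends (and merely sketches): substitute into Theorem \ref{theo 2.2}, use (\ref{eqn 1.4}) to turn ${\bf P}_{m}^{(m+j+1)}$ into a partial Bell polynomial whose arguments $p!\,t_{p-1}=f_{p-1}(p)$ match Lemma \ref{lemma 1.1}, evaluate ${\bf B}_{k,\ell}$ the same way for $g$, reconcile the binomials via $\ell\binom{\ell-1}{\ell-j}=j\binom{\ell}{j}$, and then collapse the $\ell$-sum with (\ref{eqn 2.8}) in the specialization. Your computations, including the final simplification $\frac{m+k+1}{2m+k+1}\binom{m+k}{k}\frac{1}{(m+1)!}=\frac{1}{2m+k+1}\binom{m+k+1}{k}\frac{1}{m!}$, check out.
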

\begin{example}\label{ex 2.10}
Let $f_r(x)=\sum_{i\geq 0}(ri+1)^{i-1}\frac{x^i}{i!}$, which is the
exponential generating function for rooted complete $r$-ary labeled
trees for $r\geq 0$ and satisfies the relation
$f_r(x)=e^{xf_r(x)^r}$. By the Lagrange inversion formula, one can
deduce $f_{r,m}(i)=i(rm+i)^{m-1}$. Then (\ref{eqn 2.9}) produces
\begin{eqnarray*}
\sum_{P\in\mathscr{M}_{n}^{m,k}}\prod_{i\geq1}\left\{\frac{((r+1)i+1)^{i-1}}{i!}\right\}^{u_i(P)}
&=&{\binom{m+k+1}{k}}\frac{((r+2)m+k+1)^{m-1}}{m!}.
\end{eqnarray*}
\end{example}

Let $t_i=\frac{\phi_i(1)}{i!}$ and
$s_i=\frac{\psi_{i-1}(1)}{(i-1)!}$, where $\{\phi_n(x)\}_{n\geq 0}$
and $\{\psi_n(x)\}_{n\geq 0}$ are binomial sequences, using
(\ref{eqn 1.4}) and (\ref{eqn vandcon}), by Theorem \ref{theo 2.2}
and Lemma \ref{lemma 1.2}, one can deduce that
\begin{corollary}\label{coro 2.11}
For any integers $n,m,k\geq 0$, there holds
\begin{eqnarray*}
\lefteqn{\sum_{P\in\mathscr{M}_{n}^{m,k}}\prod_{i\geq1}\left\{\frac{\phi_i(1)}{i!}\right\}^{u_i(P)}
\prod_{i\geq1}\left\{\frac{\psi_{i-1}(1)}{(i-1)!}\right\}^{h_i(P)} }\\
&=&\sum_{j=0}^k\sum_{\ell=j}^{k}(-1)^{\ell-j}\binom{\ell-1}{\ell-j}\frac{\psi_{k-\ell}(\ell)}{(k-\ell)!}
{\binom{m+j}{j}}\frac{\phi_m(m+j+1)}{(m+1)!}.
\end{eqnarray*}
\end{corollary}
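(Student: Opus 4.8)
The plan is to substitute the prescribed weights directly into the closed form of Theorem~\ref{theo 2.2} and to evaluate its two symbolic factors separately. The key observation is that Theorem~\ref{theo 2.2} already supplies the combinatorial coefficient $(-1)^{\ell-j}\binom{\ell-1}{\ell-j}\binom{m+j}{j}$ appearing verbatim in the claim (this is precisely where the Vandermonde-type identity (\ref{eqn vandcon}) was spent), so no further resummation is required: it remains only to show that the potential-polynomial factor $\mathbf{P}_m^{(m+j+1)}(1!t_1,2!t_2,\dots)/(m+1)!$ collapses to $\phi_m(m+j+1)/(m+1)!$ and that the Bell-polynomial factor $\ell!\,\mathbf{B}_{k,\ell}(1!s_1,2!s_2,\dots)/k!$ collapses to $\psi_{k-\ell}(\ell)/(k-\ell)!$.

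For the $h$-segment factor I would first record that $i!\,s_i=i!\cdot\psi_{i-1}(1)/(i-1)!=i\,\psi_{i-1}(1)$, so the argument list of $\mathbf{B}_{k,\ell}$ becomes $(\psi_0(1),2\psi_1(1),3\psi_2(1),\dots)$. Since $\{\psi_n\}$ is binomial, $\psi_0\equiv 1$, hence $\psi_0(1)=1$ and the list is exactly $(1,2\psi_1(1),3\psi_2(1),\dots)$, the input of Lemma~\ref{lemma 1.2}. Applying that lemma with $(k,\ell)$ in place of $(m,r)$ gives $\mathbf{B}_{k,\ell}(1,2\psi_1(1),\dots)=\binom{k}{\ell}\psi_{k-\ell}(\ell)$, and the prefactor $\ell!/k!$ turns $\binom{k}{\ell}$ into $1/(k-\ell)!$, producing $\psi_{k-\ell}(\ell)/(k-\ell)!$ as desired.

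For the $u$-segment factor, note $i!\,t_i=\phi_i(1)$, so the arguments of $\mathbf{P}_m^{(m+j+1)}$ are $(\phi_1(1),\phi_2(1),\dots)$. Because the upper index $m+j+1$ is a positive integer, I would invoke the Bell-polynomial representation (\ref{eqn 1.4}), which rewrites this potential polynomial as $\binom{2m+j+1}{m+j+1}^{-1}\mathbf{B}_{2m+j+1,\,m+j+1}(1,2\phi_1(1),3\phi_2(1),\dots)$. The argument list again matches Lemma~\ref{lemma 1.2}, now with $(2m+j+1,m+j+1)$ in place of $(m,r)$, and the relevant index shift is $(2m+j+1)-(m+j+1)=m$; thus the Bell polynomial equals $\binom{2m+j+1}{m+j+1}\phi_m(m+j+1)$. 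The two binomial factors cancel, leaving $\mathbf{P}_m^{(m+j+1)}(\phi_1(1),\phi_2(1),\dots)=\phi_m(m+j+1)$, i.e.\ the factor $\phi_m(m+j+1)/(m+1)!$.

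Assembling the two evaluations inside the double sum of Theorem~\ref{theo 2.2} reproduces the claimed identity term by term. The computation is essentially bookkeeping; the only genuine care needed is in matching the two argument lists to the exact normalization $(1,2\phi_1(1),3\phi_2(1),\dots)$ demanded by Lemma~\ref{lemma 1.2} — in particular using $\phi_0(1)=\psi_0(1)=1$ to fix the leading entry — and in verifying the cancellation $\binom{2m+j+1}{m+j+1}^{-1}\binom{2m+j+1}{m+j+1}=1$ together with the index shift in the $u$-segment factor. I expect this normalization-reconciliation to be the main (though minor) obstacle, since it is where the conventions of (\ref{eqn 1.4}) and Lemma~\ref{lemma 1.2} must be lined up correctly.
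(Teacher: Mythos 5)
Your proof is correct and takes essentially the same route as the paper, which proves this corollary precisely by combining Theorem~\ref{theo 2.2} with the conversion formula (\ref{eqn 1.4}) and Lemma~\ref{lemma 1.2}; your two factor evaluations (the cancellation of $\binom{2m+j+1}{m+j+1}$ giving ${\bf P}_m^{(m+j+1)}(\phi_1(1),\phi_2(1),\dots)=\phi_m(m+j+1)$, and $\frac{\ell!}{k!}\binom{k}{\ell}\psi_{k-\ell}(\ell)=\frac{\psi_{k-\ell}(\ell)}{(k-\ell)!}$) are exactly the intended bookkeeping. The normalization details you flag, including $\phi_0(1)=\psi_0(1)=1$ to match the argument lists of Lemma~\ref{lemma 1.2}, are handled correctly.
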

\begin{example}
Corollary \ref{coro 2.11}, in the case
$\psi_n(x)=x(x+1)\cdots(x+n-1)$, by (\ref{eqn 2.8}), produces
\begin{eqnarray}\label{eqn 2.10}
\sum_{P\in\mathscr{M}_{n}^{m,k}}\prod_{i\geq1}\left\{\frac{\phi_i(1)}{i!}\right\}^{u_i(P)}
={\binom{m+k}{k}}\frac{\phi_m(m+k+1)}{(m+1)!}.
\end{eqnarray}
In addition, let $\phi_n(x)=x(x-qn)^{n-1}$ for fixed $q$. Then
(\ref{eqn 2.10}) generates
\begin{eqnarray*}
\sum_{P\in\mathscr{M}_{n}^{m,k}}\prod_{i\geq1}\left\{\frac{(1-qi)^{i-1}}{i!}\right\}^{u_i(P)}
&=&{\binom{m+k+1}{k}}\frac{((1-q)m+k+1)^{m-1}}{m!},
\end{eqnarray*}
which, in the case $q=-(r+1)$, coincides with Example \ref{ex 2.10}.

Let $\phi_n(x)=\sum_{i=0}^nS(n,i)x^i$, which implies $\phi_n(1)$ is
the $n$-th Bell number $B_n$. Then (\ref{eqn 2.10}) produces
\begin{eqnarray*}
\sum_{P\in\mathscr{M}_{n}^{m,k}}\prod_{i\geq1}\left\{\frac{B_i}{i!}\right\}^{u_i(P)}
&=&{\binom{m+k}{k}}\frac{\sum_{i=0}^mS(m,i)(m+k+1)^i}{(m+1)!}.
\end{eqnarray*}
\end{example}

\section{Special Motzkin paths: Compositions}
A {\em composition} of nonnegative integer $\lambda$ into $j$ {\em
parts} is an ordered sequence $\lambda_1,\lambda_2,\dots,\lambda_j$
of length $j$ such that
$\lambda=\lambda_1+\lambda_2+\cdots+\lambda_j$ with each
$\lambda_i\geq 0$. Each $\lambda_i$ is called the $i$-th {\em
summand} of the composition. Compositions are well known
combinatorial objects \cite{andrews, carlitz, comtet} and several of
their properties have been discussed in some recent papers, as in
\cite{bjorner,heubach,heubman,knopf,knopfrob,merlunci}.

A composition can be regarded as a special Motzkin path if each
summand $\lambda_i$ is replaced by $u^{\lambda_i}d^{\lambda_i}$ when
$\lambda_i\geq 1$ and by a $h$ when $\lambda_i=0$. A $u$-segment or
$h$-segment of a composition is defined to be that of its
corresponding Motzkin path.

Let $\mathscr{C}_{m,k,j}$ denote the set of compositions of $m$ with
$j$ parts and $k$ zero summands, so any $C\in \mathscr{C}_{m,k,j}$
has $j-k$ $u$-segments. Define the ordinary generating functions for
weighted compositions according to the statistics $u_1,u_2,\ldots$
and $h_1,h_2,\ldots$ as follows
\begin{eqnarray*}
C_j(x,y;{\bf t;s})&=&\sum_{m,k\geq0}x^my^k
\sum_{C\in\mathscr{C}_{m,k,j}}\prod_{i\geq1}t_i^{u_i(C)}\prod_{i\geq1}s_i^{h_i(C)}\\
C(x,y;{\bf t;s;q})&=&\sum_{j\geq0}q^jC_j(x,y;{\bf t;s}).
\end{eqnarray*}

\begin{proposition}\label{pro3.1}
The explicit formula for $C(x,y;{\bf t;s;q})$ is
\begin{eqnarray}\label{eqn 3.1}
C(x,y;{\bf t;s;q})&=&\frac{S(qy)}{1+qS(qy)-qS(qy)T(x)},
\end{eqnarray}
where $T(x)=1+\sum_{i\geq 1}t_ix^i$, $S(y)=1+\sum_{i\geq 1}s_iy^i$.
\end{proposition}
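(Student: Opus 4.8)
The plan is to enumerate compositions directly as words built from two kinds of indecomposable pieces, using the marker $q$ to record the number of parts, and to compute the resulting generating function by summing a geometric series. First I would record how the two statistics behave under the composition-to-Motzkin-path correspondence. A positive summand $\lambda_i=a$ becomes the pyramid $u^ad^a$, which contributes exactly one $u$-segment, of length $a$; moreover such a $u$-segment can never merge with the $u$-segment of a neighbouring piece, since consecutive pieces are always separated by a down-step or a horizontal-step. Hence each positive summand contributes the weight $q\,t_a x^a$ (one part, $a$ up-steps, one $u$-segment of length $a$), and summing over $a\geq 1$ gives the single-piece weight $A:=q(T(x)-1)$.

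The essential point, and the main obstacle, is the behaviour of the $h$-segment statistic. A zero summand becomes a single $h$, and when several zero summands occur consecutively their horizontal-steps merge into one $h$-segment. Thus I would not treat zero summands individually but group each maximal run of $r$ consecutive zeros into a single object: it uses $r$ parts, contributes $r$ horizontal-steps, and forms one $h$-segment of length $r$, so its weight is $q^r s_r y^r=s_r(qy)^r$. Summing over $r\geq 1$ gives the weight $B:=S(qy)-1$ of a maximal zero-run. The price of this grouping is a nonlocal constraint: two such runs may never be adjacent, since a maximal run cannot be immediately followed by another zero.

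With these two pieces in hand, a composition with an arbitrary number of parts corresponds bijectively to a word over the two-letter alphabet $\{P,Z\}$---with $P$ a positive summand of weight $A$ and $Z$ a maximal zero-run of weight $B$---in which no two $Z$'s are adjacent. I would enumerate such words by writing each one uniquely as $P^{a_0}ZP^{a_1}Z\cdots ZP^{a_\ell}$ with $\ell\geq 0$ occurrences of $Z$, where $a_0,a_\ell\geq 0$ while the internal exponents $a_1,\dots,a_{\ell-1}\geq 1$ (this is exactly the no-$ZZ$ condition). Summing the resulting geometric series in $\ell$ yields
$$C(x,y;{\bf t;s;q})=\frac{1}{1-A}+\frac{B}{(1-A)(1-A-AB)}=\frac{1+B}{1-A-AB}.$$

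Finally I would simplify. Combining the two fractions, the numerator factors as $(1-A)(1+B)$, which cancels the spurious factor $1-A$ and leaves $\frac{1+B}{1-A-AB}=\frac{1+B}{1-A(1+B)}$. Substituting $1+B=S(qy)$ and $A=q(T(x)-1)$, so that $A(1+B)=q(T(x)-1)S(qy)=qS(qy)T(x)-qS(qy)$, turns this into $\frac{S(qy)}{1+qS(qy)-qS(qy)T(x)}$, which is exactly (\ref{eqn 3.1}). I expect the only delicate points to be the justification that $u$-segments never merge across summands while the $h$-segments of adjacent zero-runs always do, and the careful treatment of the boundary exponents $a_0,a_\ell$ (which may vanish) in the word decomposition; the algebra afterwards is routine, and the constant term $C(0,0;{\bf t;s;q})=1$ provides a quick sanity check.
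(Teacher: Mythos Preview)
Your proof is correct. Both your argument and the paper's rest on the same combinatorial observation---$u$-segments correspond to individual positive parts and never merge, while $h$-segments correspond to maximal runs of zero parts---but the execution differs. The paper sets up a recurrence in the number of parts: peeling off the leading maximal run of zeros together with the first nonzero part yields
\[
C_j(x,y;{\bf t;s})=s_jy^j+\sum_{i=1}^{j}s_{j-i}y^{j-i}(T(x)-1)\,C_{i-1}(x,y;{\bf t;s}),
\]
and summing against $q^j$ gives the linear equation $C=S(qy)\bigl(1+q(T(x)-1)C\bigr)$, which is then solved for $C$. Your direct enumeration of words over $\{P,Z\}$ avoiding $ZZ$ is equivalent but bypasses the per-$j$ recurrence; it is arguably more transparent, at the cost of the boundary-exponent bookkeeping you flag. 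Incidentally, the no-$ZZ$ language factors as $(\varepsilon+Z)\bigl(P(\varepsilon+Z)\bigr)^*$, which gives $\dfrac{1+B}{1-A(1+B)}$ in a single line and would let you skip the case split on $\ell$.
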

\pf A recurrence relation for $C_j(x,y;{\bf t;s})$ can be derived as
follows
\begin{eqnarray*}
C_j(x,y;{\bf
t;s})=s_jy^j+\sum_{i=1}^{j}s_{j-i}y^{j-i}C_{i-1}(x,y;{\bf
t;s})(T(x)-1),
\end{eqnarray*}
for $j\geq 1$ and $C_0(x,y;{\bf t;s})=1$ if one notices that a
composition begins with a $h$-segment of length $i$ for $0\leq i\leq
j$ or a $u$-segment of length $r$ for $r\geq 1$. Then
\begin{eqnarray*}
C(x,y;{\bf t;s;q})&=&\sum_{j\geq0}q^jC_j(x,y;{\bf t;s})\\
&=&1+\sum_{j\geq1}q^j\left\{s_jy^j+\sum_{i=1}^{j}s_{j-i}y^{j-i}C_{i-1}(x,y;{\bf
t;s})(T(x)-1)\right\}\\
&=&S(qy)(1+q(T(x)-1)C(x,y;{\bf t;s;q})),
\end{eqnarray*}
which leads to (\ref{eqn 3.1}). \qed

\begin{theorem}\label{theo 3.2}
For any integers $m,k,j\geq 0$, there holds
\begin{eqnarray*}
\sum_{C\in\mathscr{C}_{m,k,j}}\prod_{i\geq1}t_i^{u_i(C)}\prod_{i\geq1}s_i^{h_i(C)}
&=&\frac{{\bf
P}_{k}^{(j-k+1)}\big(1!s_1,2!s_2,\cdots\big)}{k!}\frac{(j-k)!{\bf
B}_{m,j-k}\big(1!t_1,2!t_2,\cdots\big)}{m!}.
\end{eqnarray*}
\end{theorem}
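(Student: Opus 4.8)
The plan is to read off the desired coefficient directly from the closed form for $C(x,y;{\bf t;s;q})$ furnished by Proposition~\ref{pro3.1}. By the definitions of $C_j$ and $C$, the left-hand side of the theorem is precisely $[x^m y^k q^j]\,C(x,y;{\bf t;s;q})$. First I would rewrite the denominator of (\ref{eqn 3.1}) as $1 - qS(qy)\big(T(x)-1\big)$ and expand as a geometric series, obtaining
\[
C(x,y;{\bf t;s;q}) = \sum_{r\geq 0} q^r\,S(qy)^{r+1}\,\big(T(x)-1\big)^r.
\]
This expansion cleanly separates the variables: all of the $x$-dependence sits in $\big(T(x)-1\big)^r$ and all of the $y$-dependence in $S(qy)^{r+1}$.

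Next I would extract the coefficient of $x^m$. Since $T(x)-1 = \sum_{i\geq 1} t_i x^i$, the multinomial theorem together with the explicit formula (\ref{eqn 1.5}) gives
\[
[x^m]\big(T(x)-1\big)^r = \sum_{\sigma_m(r)} \frac{r!}{r_1!\cdots r_m!}\,t_1^{r_1}\cdots t_m^{r_m} = \frac{r!}{m!}\,{\bf B}_{m,r}\big(1!t_1,2!t_2,\cdots\big),
\]
which accounts for the second factor in the claimed product.

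The key bookkeeping step is the joint extraction of the $y$- and $q$-coefficients. In $S(qy)^{r+1}$ each power of $y$ is accompanied by an equal power of $q$, so $[y^k]\,S(qy)^{r+1} = q^k\,[y^k]\,S(y)^{r+1}$. Combined with the prefactor $q^r$, the total $q$-degree of the $r$-th summand is $r+k$; hence taking $[q^j]$ forces $r = j-k$, and the contribution vanishes unless $j\geq k$ (consistent with the fact that a composition with $j$ parts and $k$ zero summands has exactly $j-k$ $u$-segments). To finish I would identify $[y^k]\,S(y)^{j-k+1}$: putting $s_i = f_i/i!$ so that $S(y) = 1 + \sum_{n\geq 1} f_n y^n/n!$, the defining relation of the potential polynomials yields $S(y)^\lambda = 1 + \sum_{n\geq 1} {\bf P}_n^{(\lambda)}\big(1!s_1,2!s_2,\cdots\big)\,y^n/n!$, so that
\[
[y^k]\,S(y)^{j-k+1} = \frac{{\bf P}_k^{(j-k+1)}\big(1!s_1,2!s_2,\cdots\big)}{k!}.
\]
Multiplying the two extracted factors with $r=j-k$ reproduces the right-hand side verbatim.

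The only genuine obstacle is the $q$-bookkeeping just described: one must keep firmly in mind that $S(qy)$ feeds both the $y$-degree and the $q$-degree, and combine this with the independent $q^r$ from the geometric expansion in order to pin down $r=j-k$. Everything else is a routine matching of normalizations in the Bell- and potential-polynomial identities (the scalings $i!t_i$ and $i!s_i$), already built into the conventions of Section~1.
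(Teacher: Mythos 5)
Your proof is correct, and its skeleton coincides with the paper's: both start from the closed form (\ref{eqn 3.1}) of Proposition~\ref{pro3.1}, expand the denominator as a geometric series $\sum_{r\geq 0} q^r S(qy)^{r+1}(T(x)-1)^r$, use the fact that $S(qy)$ couples the $y$- and $q$-degrees to force $r=j-k$ (with zero contribution when $j<k$), and identify $[y^k]S(y)^{j-k+1}$ with ${\bf P}_k^{(j-k+1)}\big(1!s_1,2!s_2,\cdots\big)/k!$ from the definition of the potential polynomials. Where you genuinely diverge is the treatment of $[x^m](T(x)-1)^{j-k}$: you read it off in one stroke from the multinomial theorem, observing that the resulting sum over $\sigma_m(j-k)$ is exactly $\frac{(j-k)!}{m!}{\bf B}_{m,j-k}\big(1!t_1,2!t_2,\cdots\big)$ by the explicit formula (\ref{eqn 1.5}). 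The paper instead expands $(T(x)-1)^{j-k}$ binomially in powers of $T(x)$, writes each $[x^m]T(x)^{j-k-i}$ as a potential polynomial ${\bf P}_m^{(j-k-i)}/m!$, converts those into Bell polynomials via (\ref{eqn 1.3}), and then collapses the resulting double sum with the Kronecker-delta identity $\sum_{i=0}^{j-k}(-1)^i\binom{j-k}{i}\binom{j-k-i}{r}=\delta_{r,j-k}$. The two computations establish the same coefficient identity; yours is shorter and makes the appearance of ${\bf B}_{m,j-k}$ transparent (it is just the multinomial count of segment types), while the paper's longer route keeps every step inside the potential-polynomial calculus of Section~1, at the cost of an alternating-sum identity that your argument never needs.
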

\begin{proof}
By the definition of potential polynomials and (\ref{eqn 3.1}),
using the identity
\begin{eqnarray*}
\sum_{i=0}^{j-k}(-1)^i\binom{j-k}{i}\binom{j-k-i}{r}=\delta_{r,j-k},
\end{eqnarray*}
where $\delta_{r,j-k}$ is the Kronecker symbol, we have
\begin{eqnarray*}
\lefteqn{\sum_{C\in\mathscr{C}_{m,k,j}}\prod_{i\geq1}t_i^{u_i(C)}\prod_{i\geq1}s_i^{h_i(C)}=
[x^{m}y^kq^j]C(x,y;{\bf t;s;q})}\\
&=&[x^{m}y^kq^j]\frac{S(qy)}{1+qS(qy)-qS(qy)T(x)}=[x^{m}y^kq^j]\sum_{i\geq 0}q^iS(qy)^{i+1}(T(x)-1)^i\\
&=& [x^{m}(qy)^{k}]S(qy)^{j-k+1}(T(x)-1)^{j-k}=[(qy)^{k}]S(qy)^{j-k+1}[x^{m}](T(x)-1)^{j-k}\\
&=&[(qy)^{k}]S(qy)^{j-k+1}\sum_{i=0}^{j-k}(-1)^{i}\binom{j-k}{i}[x^{m}]T(x)^{j-k-i}\\
&=&\frac{{\bf
P}_{k}^{(j-k+1)}\big(1!s_1,2!s_2,\cdots\big)}{k!}\sum_{i=0}^{j-k}(-1)^i\binom{j-k}{i}\frac{{\bf
P}_{m}^{(j-k-i)}\big(1!t_1,2!t_2,\cdots\big)}{m!}\\
&=&\frac{{\bf
P}_{k}^{(j-k+1)}\big(1!s_1,2!s_2,\cdots\big)}{k!}\sum_{i=0}^{j-k}(-1)^i\binom{j-k}{i}\sum_{r=0}^m\frac{r!}{m!}
\binom{j-k-i}{r}{\bf B}_{m,r}\big(1!t_1,2!t_2,\cdots\big)\\
&=&\frac{{\bf
P}_{k}^{(j-k+1)}\big(1!s_1,2!s_2,\cdots\big)}{k!}\sum_{r=0}^m\frac{r!}{m!}
{\bf B}_{m,r}\big(1!t_1,2!t_2,\cdots\big)\sum_{i=0}^{j-k}(-1)^i\binom{j-k}{i}\binom{j-k-i}{r}\\
&=&\frac{{\bf
P}_{k}^{(j-k+1)}\big(1!s_1,2!s_2,\cdots\big)}{k!}\frac{(j-k)!{\bf
B}_{m,j-k}\big(1!t_1,2!t_2,\cdots\big)}{m!},
\end{eqnarray*}
as claimed.
\end{proof}

{\em Remark: Theorem \ref{theo 3.2} provides a unified method to
investigate compositions. This very general framework can be applied
to many special cases by specializing the parameters. For examples,
let $T(x)=\frac{1}{1-x}-x^r$, i.e., $t_i=1$ except for $t_r=0$ for
$i\geq 1$, then Theorem \ref{theo 3.2} in the case $k=0$ leads to
compositions without occurrences of $r$ \cite{chinn}; More
generally, let $T(x)=1+\sum_{i\in A}x^i$, where $A$ is a given set
of positive integers, then Theorem \ref{theo 3.2} in the case $k=0$
leads to compositions with summands in a given set \cite{heubach}.}

Recall that any $C\in \mathscr{C}_{m,k,j}$ has $j-k$ $u$-segments.
Let $\mathscr{C}_{m,k,j}^{\ell}$ be the subset of
$\mathscr{C}_{m,k,j}$ with $\ell$ number of $h$-segments. Note that
${\bf B}_{m,r}\big(1!qt_1,2!qt_2,\cdots\big)=q^r{\bf
B}_{m,r}\big(1!t_1,2!t_2,\cdots\big)$ by (\ref{eqn 1.5}), combining
(\ref{eqn 1.3}) with Theorem \ref{theo 3.2}, we have
\begin{corollary}\label{coro 3.3}
For any integers $m,k,\ell\geq 0$, there holds
\begin{eqnarray*}
\sum_{C\in\mathscr{C}_{m,k,j}^{\ell}}\prod_{i\geq1}t_i^{u_i(C)}\prod_{i\geq1}s_i^{h_i(C)}
=\binom{j-k+1}{\ell}\frac{(j-k)!\ell!}{k!m!}{\bf
B}_{m,j-k}\big(1!t_1,2!t_2,\cdots\big){\bf
B}_{k,\ell}\big(1!s_1,2!s_2,\cdots\big).
\end{eqnarray*}
\end{corollary}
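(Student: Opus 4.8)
The plan is to obtain Corollary \ref{coro 3.3} as a refinement of Theorem \ref{theo 3.2} by introducing an auxiliary variable that records the number of $h$-segments. Since $h_i(C)$ counts the $h$-segments of length $i$, the total number of $h$-segments of a composition $C$ is $\sum_{i\geq 1}h_i(C)$, and $C\in\mathscr{C}_{m,k,j}^{\ell}$ holds precisely when this sum equals $\ell$. Thus, replacing each $s_i$ by $qs_i$ in the weight turns $\prod_{i\geq 1}s_i^{h_i(C)}$ into $q^{\sum_i h_i(C)}\prod_{i\geq 1}s_i^{h_i(C)}$, so that extracting the coefficient of $q^\ell$ isolates exactly the compositions in $\mathscr{C}_{m,k,j}^{\ell}$. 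Concretely, the left-hand side of the corollary equals the operation $[q^\ell]$ applied to the left-hand side of Theorem \ref{theo 3.2} after the substitution $s_i\mapsto qs_i$.

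First I would apply Theorem \ref{theo 3.2} with this substitution. The factor ${\bf B}_{m,j-k}(1!t_1,2!t_2,\cdots)$ does not involve the $s_i$ and is therefore untouched, while the $s$-dependent factor becomes ${\bf P}_{k}^{(j-k+1)}(1!qs_1,2!qs_2,\cdots)/k!$. The key step is then to expand this potential polynomial and read off its $q^\ell$-coefficient.

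For that I would invoke the Bell-polynomial representation (\ref{eqn 1.3}) of the potential polynomials together with the homogeneity relation ${\bf B}_{k,\ell}(1!qs_1,2!qs_2,\cdots)=q^\ell{\bf B}_{k,\ell}(1!s_1,2!s_2,\cdots)$, which follows at once from the explicit formula (\ref{eqn 1.5}), since every monomial in ${\bf B}_{k,\ell}$ has total degree $\ell=r_1+r_2+\cdots$ in its arguments. Together these give
\[
{\bf P}_{k}^{(j-k+1)}(1!qs_1,2!qs_2,\cdots)=\sum_{\ell=0}^{k}\binom{j-k+1}{\ell}\ell!\,q^\ell\,{\bf B}_{k,\ell}(1!s_1,2!s_2,\cdots),
\]
so that the $q^\ell$-coefficient is $\binom{j-k+1}{\ell}\ell!\,{\bf B}_{k,\ell}(1!s_1,2!s_2,\cdots)$. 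Multiplying by the unchanged $t$-factor $(j-k)!\,{\bf B}_{m,j-k}(1!t_1,2!t_2,\cdots)/m!$ and dividing by $k!$ recovers exactly the claimed right-hand side.

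The computation involves no genuine obstacle; it is a clean specialization of the already-established Theorem \ref{theo 3.2}. The one point that must be handled with care is the bookkeeping of the marking variable: one must verify that $q$ really tracks the \emph{number} of $h$-segments, namely $\sum_i h_i(C)$, rather than their total length, and that the summation index $\ell$ appearing in the potential-polynomial expansion coincides with this count. Once the homogeneity relation is in place this identification is immediate, and extracting $[q^\ell]$ from both sides completes the argument.
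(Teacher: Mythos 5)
Your proposal is correct and is essentially the paper's own argument: the paper likewise obtains Corollary \ref{coro 3.3} by marking $h$-segments with a variable $q$ (i.e.\ $s_i\mapsto qs_i$), using the homogeneity ${\bf B}_{k,\ell}(1!qs_1,2!qs_2,\cdots)=q^\ell{\bf B}_{k,\ell}(1!s_1,2!s_2,\cdots)$ from (\ref{eqn 1.5}) together with the expansion (\ref{eqn 1.3}) of the potential polynomial, and then combining with Theorem \ref{theo 3.2}. Your write-up just makes explicit the coefficient extraction in $q^\ell$ that the paper leaves implicit.
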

Using (\ref{eqn 1.5}) and then comparing the coefficient of
$t_1^{r_1}t_2^{r_2}\cdots t_m^{r_m}s_1^{\ell_1}s_2^{\ell_2}\cdots
s_k^{\ell_k}$ in Corollary \ref{coro 3.3}, one can obtain that
\begin{corollary}
The number of compositions in $\mathscr{C}_{m,k,j}^{\ell}$ with
$u$-segments of type $1^{r_1}2^{r_2}\cdots m^{r_m}$ and $h$-segments
of type $1^{\ell_1}2^{\ell_2}\cdots k^{\ell_k}$ is
\begin{eqnarray*}
\binom{j-k+1}{\ell}\binom{j-k}{r_1,r_2,\cdots,r_m}\binom{\ell}{\ell_1,\ell_2,\cdots,\ell_k}.
\end{eqnarray*}
\end{corollary}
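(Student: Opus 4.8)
The plan is to read off the desired number as a single coefficient of the weighted generating function already evaluated in Corollary \ref{coro 3.3}. By the definition of the weight $\prod_{i\geq1}t_i^{u_i(C)}\prod_{i\geq1}s_i^{h_i(C)}$, a composition $C\in\mathscr{C}_{m,k,j}^{\ell}$ contributes the monomial $t_1^{r_1}\cdots t_m^{r_m}s_1^{\ell_1}\cdots s_k^{\ell_k}$ exactly when its $u$-segments have type $1^{r_1}2^{r_2}\cdots m^{r_m}$ and its $h$-segments have type $1^{\ell_1}2^{\ell_2}\cdots k^{\ell_k}$. Hence the quantity to be computed equals the coefficient $[t_1^{r_1}\cdots t_m^{r_m}s_1^{\ell_1}\cdots s_k^{\ell_k}]$ of the right-hand side of Corollary \ref{coro 3.3}, and the whole proof reduces to extracting this coefficient from the two Bell-polynomial factors.

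First I would expand the $t$-factor using the explicit formula (\ref{eqn 1.5}). Setting $x_i=i!\,t_i$ there gives $x_i/i!=t_i$, so
$${\bf B}_{m,j-k}\big(1!t_1,2!t_2,\cdots\big)=\sum_{\sigma_m(j-k)}\frac{m!}{r_1!\cdots r_m!}\,t_1^{r_1}\cdots t_m^{r_m},$$
where $\sigma_m(j-k)$ runs over the nonnegative solutions of $r_1+\cdots+r_m=j-k$ and $r_1+2r_2+\cdots+mr_m=m$. These two constraints are precisely the combinatorial conditions that a type-$1^{r_1}\cdots m^{r_m}$ family of $u$-segments consist of the full number $j-k$ of segments (recall every $C\in\mathscr{C}_{m,k,j}$ has $j-k$ $u$-segments, one per positive summand) and that the segment lengths recover the total value $m$. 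Thus the coefficient of $t_1^{r_1}\cdots t_m^{r_m}$ is $m!/(r_1!\cdots r_m!)$ when the constraints hold, and $0$ otherwise. An identical computation on the $s$-factor extracts the coefficient $k!/(\ell_1!\cdots\ell_k!)$ of $s_1^{\ell_1}\cdots s_k^{\ell_k}$ from ${\bf B}_{k,\ell}\big(1!s_1,2!s_2,\cdots\big)$, subject to $\ell_1+\cdots+\ell_k=\ell$ and $\ell_1+2\ell_2+\cdots+k\ell_k=k$.

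To finish, I would multiply these two coefficients by the scalar prefactor $\binom{j-k+1}{\ell}\frac{(j-k)!\,\ell!}{k!\,m!}$ from Corollary \ref{coro 3.3}. The $m!$ and $k!$ cancel against the denominators supplied by the Bell expansions, leaving
$$\binom{j-k+1}{\ell}\,\frac{(j-k)!}{r_1!\cdots r_m!}\,\frac{\ell!}{\ell_1!\cdots\ell_k!}=\binom{j-k+1}{\ell}\binom{j-k}{r_1,r_2,\cdots,r_m}\binom{\ell}{\ell_1,\ell_2,\cdots,\ell_k},$$
which is exactly the asserted formula. The only step demanding genuine care is matching the index set $\sigma$ of (\ref{eqn 1.5}) with the combinatorial meaning of a segment type: one must verify that the two linear relations defining $\sigma_m(j-k)$ (resp.\ $\sigma_k(\ell)$) say precisely that the segment multiplicities sum to the prescribed number of segments and that the weighted lengths recover $m$ (resp.\ $k$). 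Once this identification is made, the remainder is routine factorial bookkeeping.
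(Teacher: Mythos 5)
Your proof is correct and follows essentially the same route as the paper, which likewise obtains this corollary by expanding the two partial Bell polynomials via the explicit formula (\ref{eqn 1.5}) and comparing the coefficient of $t_1^{r_1}\cdots t_m^{r_m}s_1^{\ell_1}\cdots s_k^{\ell_k}$ on both sides of Corollary \ref{coro 3.3}. Your additional care in checking that the index constraints of $\sigma_m(j-k)$ and $\sigma_k(\ell)$ match the combinatorial meaning of segment types is a detail the paper leaves implicit, but it is the same argument.
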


\vskip0.5cm
\section{Matrix Compositions}

An {\em matrix composition} of nonnegative integer $m$ is a $p\times
j$ matrix $M$ with nonnegative entries such that $m$ is the sum of
entries of $M$ for some $p,j\geq 0$. For any $p\times j$ matrix
composition $M$, each row of $M$ can be regarded as a special
Motzkin path, just as the case $p=1$ considered in Section 3.

Recall that $C_j(x,y;{\bf t;s})$ is the ordinary generating
functions for weighted compositions with $j$ parts according to the
statistics $u_1,u_2,\ldots$ and $h_1,h_2,\ldots$. Then the ordinary
generating functions for weighted $p\times j$ matrix compositions
according to the statistics $u_1,u_2,\ldots$ and $h_1,h_2,\ldots$ is
just $C_j^p(x,y;{\bf t;s})$. From Proposition \ref{pro3.1}, one can
deduce easily that
\begin{eqnarray*}
C_j(x,y;{\bf t;s})=\sum_{i=0}^j\frac{y^{j-i}{\bf
P}_{j-i}^{(i+1)}\big(1!s_1,2!s_2,\cdots\big)}{(j-i)!}(T(x)-1)^i.
\end{eqnarray*}
However, it seems that the coefficients of $[x^my^k]$ in
$C_j^p(x,y;{\bf t;s})$ have no simple explicit formulas. For the
sake of this, we can consider a kind of special matrix compositions,
called {\em bipartite matrix compositions}, namely, each row has the
type $(a_1,a_2,\dots,a_i, 0,\dots,0)$ for some $0\leq i\leq j$,
where $a_1,\dots,a_i\geq 1$. If $a_1=\cdots=a_i=1$, then we call it
a {\em bipartite $(0,1)$-matrix}. Let $\mathscr{B}_{m,p,j}$ denote
the set of $p\times j$ bipartite matrix compositions of $m$ and let
$B_{p,j}(x;{\bf t})$ denote the ordinary generating functions for
weighted $p\times j$ bipartite matrix compositions according to the
statistics $u_1,u_2,\ldots$, that is,
\begin{eqnarray*}
B_{p,j}(x;{\bf t})=\sum_{m\geq
0}x^m\sum_{B\in\mathscr{B}_{m,p,j}}\prod_{i\geq1}t_i^{u_i(B)}.
\end{eqnarray*}

\begin{proposition}\label{pro4.1}
The explicit formula for $B_{p,j}(x;{\bf t})$ is
\begin{eqnarray}\label{eqn 4.1}
B_{p,j}(x;{\bf
t})&=&\left\{\frac{1-\{(T(x)-1)\}^{j+1}}{1-\{(T(x)-1)\}}\right\}^p,
\end{eqnarray}
where $T(x)=1+\sum_{i\geq 1}t_ix^i$.
\end{proposition}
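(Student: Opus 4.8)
The plan is to exploit the fact that the $p$ rows of a bipartite matrix composition are chosen independently, so that $B_{p,j}(x;{\bf t})$ factors as the $p$-th power of the single-row generating function. First I would isolate the case $p=1$. A single bipartite row of width $j$ is a sequence $(a_1,\dots,a_i,0,\dots,0)$ with $0\le i\le j$ and each $a_r\ge 1$, so it is completely determined by the number $i$ of positive entries together with their values $a_1,\dots,a_i$. Under the correspondence with Motzkin paths, the $r$-th positive entry $a_r$ is replaced by $u^{a_r}d^{a_r}$ and hence contributes exactly one $u$-segment of length $a_r$, while the trailing zeros become horizontal steps and produce no $u$-segments. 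Thus the $u$-weight of the row is $\prod_{r=1}^{i}t_{a_r}$ and its $x$-weight is $x^{a_1+\cdots+a_i}$.

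Summing over the value of a single positive entry gives $\sum_{a\ge 1}t_a x^a=T(x)-1$, so for a fixed number $i$ of positive parts the total contribution is $(T(x)-1)^i$. Summing over $i$ from $0$ to $j$ then produces a finite geometric series,
\begin{eqnarray*}
B_{1,j}(x;{\bf t})=\sum_{i=0}^{j}\{T(x)-1\}^i=\frac{1-\{T(x)-1\}^{j+1}}{1-\{T(x)-1\}}.
\end{eqnarray*}
Finally, since a $p\times j$ bipartite matrix composition consists of $p$ rows that may be chosen independently, and both the total $m$ (the sum of all entries) and the statistics $u_1,u_2,\dots$ are additive over rows, the generating function multiplies across rows: $B_{p,j}(x;{\bf t})=\big(B_{1,j}(x;{\bf t})\big)^p$, which is precisely (\ref{eqn 4.1}).

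The computation is routine; the only point requiring care is the combinatorial translation in the first step. One must check that, because the positive entries of a bipartite row are forced to occupy the initial positions, there is no binomial factor counting their placement, and that two consecutive positive entries $a_r,a_{r+1}$ really yield two distinct maximal up-runs, separated by the intervening down-steps $d^{a_r}$, so that the $u$-segments of the row are in bijection with its positive entries. Once this identification is secured, the geometric summation and the multiplicativity over rows finish the proof.
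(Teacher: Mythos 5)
Your proposal is correct and follows essentially the same route as the paper: decompose a single row as $(a_1,\dots,a_i,0,\dots,0)$ so that each positive entry contributes a term of $T(x)-1$, sum the finite geometric series to get $B_{1,j}(x;{\bf t})$, and then raise to the $p$-th power by independence of rows. Your added checks (that the $u$-segments of a row are in bijection with its positive entries, and that the statistics are additive over rows so the generating function multiplies) are exactly the details the paper leaves implicit.
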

\begin{proof}
For any $1\times j$ bipartite matrix compositions, it has the type
$(a_1,a_2,\dots,a_i, 0,\dots,0)$ for some $0\leq i\leq j$, where
$a_1,a_2,\dots,a_i\geq 1$, then each $a_r$ has the weight $t_{a_r}$
which is a term of $T(x)-1$. Hence we have
\begin{eqnarray*}
B_{1,j}(x;{\bf
t})=\sum_{i=0}^j(T(x)-1)^i=\frac{1-\{(T(x)-1)\}^{j+1}}{1-\{(T(x)-1)\}}.
\end{eqnarray*}
Then by the relation $B_{p,j}(x;{\bf t})=B_{1,j}(x;{\bf t})^p$, we
obtain (\ref{eqn 4.1}).
\end{proof}

\begin{theorem}\label{theo 4.2}
For any integers $m,p,j\geq 0$, there holds
\begin{eqnarray*}
\sum_{B\in\mathscr{B}_{m,p,j}}\prod_{i\geq1}t_i^{u_i(B)}=\sum_{r=0}^m\frac{r!}{m!}U_{p,j,r}{\bf
B}_{m,r}\big(1!t_1,2!t_2,\cdots\big),
\end{eqnarray*}
where
$$U_{p,j,r}=\sum_{i=0}^{[\frac{r}{j+1}]}(-1)^i\binom{p}{i}\binom{p+r-i(j+1)-1}{p-1}.$$
\end{theorem}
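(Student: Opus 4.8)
The plan is to compute $[x^m]B_{p,j}(x;{\bf t})$ directly from the closed form in Proposition~\ref{pro4.1}, since the left-hand side of the theorem is exactly this coefficient. First I would abbreviate $w=T(x)-1=\sum_{i\geq 1}t_ix^i$, a power series with no constant term, and rewrite the bracket as the geometric-type factor $\frac{1-w^{j+1}}{1-w}$. Then the whole generating function becomes $(1-w^{j+1})^p(1-w)^{-p}$, which I would expand by applying the binomial theorem to each factor separately.

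The next step is to collect the coefficient of $w^r$. Since $(1-w^{j+1})^p=\sum_{i\geq 0}(-1)^i\binom{p}{i}w^{i(j+1)}$ and $(1-w)^{-p}=\sum_{s\geq 0}\binom{p+s-1}{p-1}w^s$, pairing the two exponents via $i(j+1)+s=r$ produces exactly
$$\sum_{i=0}^{[r/(j+1)]}(-1)^i\binom{p}{i}\binom{p+r-i(j+1)-1}{p-1}=U_{p,j,r},$$
so that $B_{p,j}(x;{\bf t})=\sum_{r\geq 0}U_{p,j,r}\,w^r$ as a formal series in $w$.

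The final step, and the only place where the Bell-polynomial machinery enters, is to translate $[x^m]w^r$ into partial Bell polynomials. Writing $w=\sum_{i\geq 1}(i!\,t_i)\frac{x^i}{i!}$ and invoking the defining identity $\frac{1}{r!}\bigl(\sum_{i\geq 1}a_i\frac{x^i}{i!}\bigr)^r=\sum_{m\geq r}{\bf B}_{m,r}(a_1,a_2,\dots)\frac{x^m}{m!}$ with $a_i=i!\,t_i$ gives $[x^m]w^r=\frac{r!}{m!}{\bf B}_{m,r}(1!t_1,2!t_2,\dots)$. Substituting this into the expansion from the previous step and observing that $w$ has lowest-order term $x^1$, so $[x^m]w^r=0$ whenever $r>m$, truncates the sum at $r=m$ and yields the claimed formula.

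I expect no serious obstacle here: the two binomial expansions are routine, and the conversion of $[x^m]w^r$ into Bell polynomials is the very same device already employed in the proofs of Theorem~\ref{theo 2.2} and Theorem~\ref{theo 3.2}. The only care needed is the bookkeeping of the floor function in the summation range defining $U_{p,j,r}$ and the verification that the upper limit on $r$ may be taken to be $m$.
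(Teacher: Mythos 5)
Your proposal is correct and follows essentially the same route as the paper: expand $B_{p,j}(x;{\bf t})=(1-w^{j+1})^p(1-w)^{-p}$ in powers of $w=T(x)-1$ to identify $U_{p,j,r}$ as the coefficient of $w^r$, then convert $[x^m]w^r$ into $\frac{r!}{m!}{\bf B}_{m,r}(1!t_1,2!t_2,\cdots)$ via the defining identity of the partial Bell polynomials. The paper merely compresses the two binomial expansions into a single displayed step, so your write-up is just a more explicit version of the same argument.
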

\begin{proof}
Similar to the proof of Theorem \ref{theo 3.2}, we have
\begin{eqnarray*}
\lefteqn{\sum_{B\in\mathscr{B}_{m,p,j}}\prod_{i\geq1}t_i^{u_i(B)}=[x^{m}]B_{p,j}(x;{\bf
t})=[x^m]\left\{\frac{1-\{(T(x)-1)\}^{j+1}}{1-\{(T(x)-1)\}}\right\}^p}\\
&=&\sum_{r=0}^m\sum_{i=0}^{[\frac{r}{j+1}]}(-1)^i\binom{p}{i}\binom{p+r-i(j+1)-1}{p-1}[x^{m}](T(x)-1)^{r}\\
&=&\sum_{r=0}^mU_{p,j,r}\frac{r!{\bf
B}_{m,r}\big(1!t_1,2!t_2,\cdots\big)}{m!},
\end{eqnarray*}
as claimed.
\end{proof}

Let $\mathscr{B}_{m,p}^{j,r}$ be the subset of $\mathscr{B}_{m,p,j}$
with $r$ number of nonzero entries. Note that ${\bf
B}_{m,r}\big(1!qt_1,2!qt_2,\cdots\big)=q^r{\bf
B}_{m,r}\big(1!t_1,2!t_2,\cdots\big)$ by (\ref{eqn 1.5}), combining
(\ref{eqn 1.3}) with Theorem \ref{theo 4.2}, we have
\begin{corollary}\label{coro 4.3}
For any integers $m,p,j,r\geq 0$, there holds
\begin{eqnarray*}
\sum_{B\in\mathscr{B}_{m,p}^{j,r}}\prod_{i\geq1}t_i^{u_i(B)}
=\frac{r!{\bf B}_{m,r}\big(1!t_1,2!t_2,\cdots\big)U_{p,j,r}}{m!}.
\end{eqnarray*}
\end{corollary}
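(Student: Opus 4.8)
The plan is to introduce an auxiliary variable $q$ that records the number of nonzero entries of a bipartite matrix composition, and then to read off Corollary~\ref{coro 4.3} as the coefficient of $q^r$ in the generating function of Theorem~\ref{theo 4.2}. This mirrors exactly the passage from Theorem~\ref{theo 4.2} to its corollary that was already used in deriving Corollaries~\ref{coro 2.3} and~\ref{coro 3.3}. The key structural observation is that in a bipartite matrix composition $B\in\mathscr{B}_{m,p,j}$ each row has type $(a_1,\dots,a_i,0,\dots,0)$ with $a_1,\dots,a_i\geq1$, and under the correspondence of Section~3 each nonzero entry $a_r$ becomes the block $u^{a_r}d^{a_r}$, hence contributes exactly one $u$-segment (of length $a_r$). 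Thus the total number of nonzero entries of $B$ equals the total number of $u$-segments, namely $\sum_{i\geq1}u_i(B)$.

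First I would perform the substitution $t_i\mapsto qt_i$ in the generating function $\sum_{B\in\mathscr{B}_{m,p,j}}\prod_{i\geq1}t_i^{u_i(B)}$. Since every factor $t_i$ now carries a $q$, each monomial is weighted by $q^{\sum_{i\geq1}u_i(B)}$, i.e. by $q$ raised to the number of nonzero entries of $B$. Grouping the sum according to this count and recalling that $\mathscr{B}_{m,p}^{j,r}$ is precisely the subset of $\mathscr{B}_{m,p,j}$ with $r$ nonzero entries, I obtain
$$\sum_{B\in\mathscr{B}_{m,p,j}}\prod_{i\geq1}(qt_i)^{u_i(B)}=\sum_{r\geq0}q^r\sum_{B\in\mathscr{B}_{m,p}^{j,r}}\prod_{i\geq1}t_i^{u_i(B)},$$
so the quantity to be computed is exactly the coefficient of $q^r$ on the left-hand side.

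Next I would apply Theorem~\ref{theo 4.2} with $qt_i$ in place of $t_i$, which expresses the left-hand side as $\sum_{r=0}^m\frac{r!}{m!}U_{p,j,r}{\bf B}_{m,r}(1!qt_1,2!qt_2,\dots)$. Invoking the homogeneity relation ${\bf B}_{m,r}(1!qt_1,2!qt_2,\dots)=q^r{\bf B}_{m,r}(1!t_1,2!t_2,\dots)$, which is immediate from the explicit formula (\ref{eqn 1.5}) since every term there is homogeneous of degree $r$ in the arguments, this becomes
$$\sum_{r=0}^m\frac{r!}{m!}U_{p,j,r}\,q^r\,{\bf B}_{m,r}(1!t_1,2!t_2,\dots).$$
Comparing the coefficient of $q^r$ on the two sides then yields the claimed identity. (For $r>m$ both sides vanish, since no bipartite matrix composition of $m$ has more than $m$ nonzero entries and ${\bf B}_{m,r}=0$ in that range, so the formula holds for all $r\geq0$.)

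I do not expect any serious obstacle here; the only point requiring care is the combinatorial identification in the first paragraph, namely that the $q$-grading by number of nonzero entries coincides with the grading induced by $t_i\mapsto qt_i$. This is transparent once one observes that $u$-segments and nonzero entries are in bijection in a bipartite matrix composition, each nonzero summand $a$ producing the single ascent $u^a$ separated from its neighbors by the intervening $d^a$. Everything else is a mechanical coefficient extraction from Theorem~\ref{theo 4.2} combined with the homogeneity (\ref{eqn 1.5}), exactly as in the proofs of the analogous Corollaries~\ref{coro 2.3} and~\ref{coro 3.3}.
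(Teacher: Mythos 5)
Your proposal is correct and follows essentially the same route as the paper: the paper also obtains Corollary~\ref{coro 4.3} by grading with the substitution $t_i\mapsto qt_i$ (so that $q$ counts $u$-segments, i.e.\ nonzero entries), applying Theorem~\ref{theo 4.2}, and extracting the coefficient of $q^r$ via the homogeneity ${\bf B}_{m,r}\big(1!qt_1,2!qt_2,\cdots\big)=q^r{\bf B}_{m,r}\big(1!t_1,2!t_2,\cdots\big)$ from (\ref{eqn 1.5}). Your write-up merely makes explicit the combinatorial identification of nonzero entries with $u$-segments, which the paper leaves implicit.
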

Using (\ref{eqn 1.5}) and comparing the coefficient of
$t_1^{r_1}t_2^{r_2}\cdots t_m^{r_m}$ in Corollary \ref{coro 4.3},
one can obtain that
\begin{corollary}
The number of $p\times j$ bipartite matrix compositions of $m$ in
$\mathscr{B}_{m,p}^{j,r}$ with nonzero entries of type
$1^{r_1}2^{r_2}\cdots m^{r_m}$ is
\begin{eqnarray*}
\binom{r}{r_1,r_2,\cdots,r_m}\sum_{i=0}^{[\frac{r}{j+1}]}(-1)^i\binom{p}{i}\binom{p+r-i(j+1)-1}{p-1}.
\end{eqnarray*}
\end{corollary}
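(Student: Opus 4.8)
The plan is to obtain the stated count as a single coefficient extraction from Corollary~\ref{coro 4.3}. By construction the generating polynomial $\sum_{B\in\mathscr{B}_{m,p}^{j,r}}\prod_{i\geq 1}t_i^{u_i(B)}$ is a polynomial in the indeterminates $t_1,t_2,\dots$, and the coefficient of the monomial $t_1^{r_1}t_2^{r_2}\cdots t_m^{r_m}$ is precisely the number of $p\times j$ bipartite matrix compositions of $m$ in $\mathscr{B}_{m,p}^{j,r}$ whose nonzero entries are of type $1^{r_1}2^{r_2}\cdots m^{r_m}$: each nonzero entry of value $k$ becomes a $u$-segment of length $k$ and so contributes exactly one factor $t_k$ to $\prod_i t_i^{u_i(B)}$. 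Hence it suffices to compute $[t_1^{r_1}\cdots t_m^{r_m}]$ on the right-hand side $\frac{r!}{m!}U_{p,j,r}\,{\bf B}_{m,r}(1!t_1,2!t_2,\cdots)$.

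To do this I would substitute $x_k=k!\,t_k$ into the explicit formula (\ref{eqn 1.5}) for the partial Bell polynomial. Since $x_k/k!=t_k$, each term of the defining sum collapses to a monomial, giving
\begin{eqnarray*}
{\bf B}_{m,r}(1!t_1,2!t_2,\cdots)=\sum_{\substack{s_1+\cdots+s_m=r\\ s_1+2s_2+\cdots+ms_m=m}}\frac{m!}{s_1!s_2!\cdots s_m!}\,t_1^{s_1}t_2^{s_2}\cdots t_m^{s_m}.
\end{eqnarray*}
Reading off the monomial $t_1^{r_1}\cdots t_m^{r_m}$, its coefficient is $\frac{m!}{r_1!\cdots r_m!}$ when the exponents meet both constraints $r_1+\cdots+r_m=r$ and $r_1+2r_2+\cdots+mr_m=m$, and is $0$ otherwise.

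Multiplying by the prefactor $\frac{r!}{m!}U_{p,j,r}$ cancels the $m!$ and produces $\frac{r!}{r_1!\cdots r_m!}U_{p,j,r}=\binom{r}{r_1,r_2,\cdots,r_m}U_{p,j,r}$; inserting the explicit value $U_{p,j,r}=\sum_{i=0}^{[\frac{r}{j+1}]}(-1)^i\binom{p}{i}\binom{p+r-i(j+1)-1}{p-1}$ yields exactly the claimed formula. The argument is essentially mechanical, so the only point requiring care -- and thus the ``main obstacle'' -- is a bookkeeping consistency check: one must verify that the exponent vectors $(r_1,\dots,r_m)$ relevant to the enumeration, namely those recording a type $1^{r_1}\cdots m^{r_m}$ with $r$ nonzero entries summing to $m$, are precisely those indexing the Bell-polynomial sum, i.e.\ that $\sum_k r_k=r$ and $\sum_k k\,r_k=m$. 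This is immediate from the definitions, and outside this range both the count and the extracted coefficient vanish, so the identity holds trivially there as well.
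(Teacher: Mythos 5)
Your proposal is correct and matches the paper's proof exactly: the paper likewise derives this corollary by using the explicit formula (\ref{eqn 1.5}) for the partial Bell polynomial and comparing the coefficient of $t_1^{r_1}t_2^{r_2}\cdots t_m^{r_m}$ on both sides of Corollary \ref{coro 4.3}, with the multinomial coefficient emerging from the cancellation of $m!$ against the prefactor $\frac{r!}{m!}$. Your extra bookkeeping check that the exponent constraints $\sum_k r_k=r$ and $\sum_k k\,r_k=m$ match on both sides is sound and is the only point of care in this mechanical argument.
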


\begin{example}
Let $T(x)=1+x$, then Theorem \ref{theo 4.2} signifies that the
number of $p\times j$ bipartite matrix compositions of $m$ with
nonzero summands $1$ or, in other words, of $p\times j$ bipartite
(0,1)-matrices with $m$ ones is counted by $U_{p,j,m}$. Specializing
to $p=m+1$, we have
$$U_{m+1,j,m}=\sum_{i=0}^{[\frac{m}{j+1}]}(-1)^i\binom{m+1}{i}\binom{2m-i(j+1)}{m}.$$
Note that the number $\frac{1}{m+1}U_{m+1,j,m}$ counts the unlabeled
plane trees on $m+1$ vertices in which every vertex has outdegree
not greater than $j$. Klarner \cite{klarner} first considered this
problem, which was solved by Chen \cite{chen90} and later by Mansour
and Sun \cite{mansun1}. Then it is clear that $U_{m+1,j,m}$ counts
the unlabeled double rooted plane trees on $m+1$ vertices in which
every vertex has outdegree not greater than $j$. We leave it as an
open problem to find the bijection between these two settings.
\end{example}

\section*{Acknowledgements} The authors are grateful to the anonymous referees for the
helpful suggestions and comments. The work is supported by The
National Science Foundation of China (10726021).



\vskip1cm

\end{document}